% !TEX encoding = UTF-8 Unicode
\documentclass[12pt]{article}

\setlength{\oddsidemargin}{.25cm}
\setlength{\evensidemargin}{.25cm}
\setlength{\textwidth}{6.35in}
\setlength{\textheight}{8.2in}

\usepackage{amsthm,tikz,amssymb, latexsym, amsmath, amscd, amsfonts, array, graphicx, lmodern, slantsc,enumerate,stmaryrd,rotating}

\DeclareFontFamily{U}{mathx}{}
\DeclareFontShape{U}{mathx}{m}{n}{<-> mathx10}{}
\DeclareSymbolFont{mathx}{U}{mathx}{m}{n}
\DeclareMathAccent{\widehat}{0}{mathx}{"70}
\DeclareMathAccent{\widecheck}{0}{mathx}{"71}

\usepackage[english]{babel}        
\usepackage[all]{xy}
\CompileMatrices

\usepackage{color}

\def\F{\protect\operatorname{Conf}}
\def\DF{\protect\operatorname{DConf}}

\def\tilde{~}

\newtheorem{proposition}{Proposition}[section]
\newtheorem{corollary}[proposition]{Corollary}
\newtheorem{definition}[proposition]{Definition}
\newtheorem{theorem}[proposition]{Theorem}
\newtheorem{remark}[proposition]{Remark}
\newtheorem{example}[proposition]{Example}
\newtheorem{lemma}[proposition]{Lemma}

\begin{document}

\title{Abrams' stabilization theorem for no-$k$-equal configuration spaces on graphs}
\author{Omar Alvarado-Gardu\~no	and Jes\'us Gonz\'alez}
\date{\empty}

\maketitle

\begin{abstract}
For a graph $G$, let $\F(G,n)$ denote the classical configuration space of $n$ labelled points in $G$. Abrams introduced a cubical complex, denoted here by $\DF(G,n)$, sitting inside $\F(G,n)$ as a strong deformation retract provided $G$ is suitably subdivided. Using discrete Morse Theory techniques, we extend Abrams' result to the realm of configurations having no $k$-fold collisions.
\end{abstract}

{\small 2020 Mathematics Subject Classification: 20F36, 20F65, 55P10, 55R80.}

{\small Keywords and phrases: no-$k$-equal configurations, discrete Morse theory, check those in the paper by Abrams-Guy-Hower.}

\section{Introduction and main result}
The ordered configuration space of $n$ labelled pairwise-distinct particles on a topological space $X$, $$\F(X,n)=\{(x_1,\ldots,x_n)\in X^n\colon x_i\neq x_j \mbox{ for } i\neq j \},$$ is a central object of study in pure and applied mathematics. The case $X=\mathbb{C}$ is historically and theoretically important, as $\F(\mathbb{C},n)$ is a model for the classifying space of Artin's classical pure braid group on $n$ strands. The topology of $\F(X,n)$ is reasonably well understood when $X=M$ is a smooth manifold of dimension at least 2, due in part to the presence of the Fadell-Neuwirth fibrations $\F(M,n+1)\to\F(M,n)$. On the other hand, after the pioneering work by Abrams and Ghrist \cite{MR2701024,MR1903151}, the case of a 1-dimensional space $X=G$ ---a graph--- has got attention from topologists due to its relevance in geometric group theory, as $\F(G,n)$ is aspherical, as well as in robotics, where $\F(G,n)$ approximates the space of states of $n$ autonomous agents moving without collision along a $G$-shaped system of tracks.

\smallskip
More generally, for an integer $k\in\{2,3,\ldots,n\}$, the \emph{no-$k$-equal configuration space} of $n$ labelled particles on $X$ without $k$-fold collisions is the subspace $\F^{k}(X,n)$ of the product $X^n$ consisting of the $n$-tuples $(x_1,\ldots,x_n)$ for which no set $\{x_{i_1},\cdots,x_{i_k}\}$ with $1\leq i_1<i_2<\ldots<i_k\leq n$ is a singleton. In an early appearance, these spaces were used to model and study Borsuk-Ulam type results (\cite{MR0425949}). More recently, no-$k$-equal configuration spaces have found a wide range of applications in homotopy theory and related areas. Thus, $\F^{k}(\mathbb{R},n)$ is the complement of a $k$-generalized Coxeter arrangement of type $A_{n-1}$, so its (co)homology groups can be expressed combinatorially (\cite{MR1317619,MR2915650}). On the other hand, the work in \cite{MR3426383} shows that no-$k$-equal configuration spaces play a subtle role in the study of the limit of Goodwillie's tower of a space of no-$k$-self-intersecting immersions. Furthermore, a slight generalization of no-$k$-equal configuration spaces, where quality rather than just quantity of collisions is controlled, has been used in~\cite{dobrinskayaloops} to study the homology of loop spaces on polyhedral products with simply connected factors. Worth noticing is the fact that the inclusion $\F^{k}(X,n)\hookrightarrow X^n$ turns out to be a homotopy equivalence through a range that grows linearly with $k$ and the local homotopy dimension of $X$ (\cite[Theorem~1.2]{MR3572355}). 

\smallskip
Following Abrams and Ghrist's lead, we focus on no-$k$-equal configuration spaces on graphs, which are objects with rich topological properties and a wide variety of applications. For instance, for the connected graph $I$ with two vertices and a single edge, $\F^{k}(I,n)\simeq\F^{k}(\mathbb{R},n)$ was used in~\cite{MR1243770} in order to estimate the size and depth of an optimal decision tree that answers whether there are $k$-multiplicities among $n$ given real numbers, while $\F^{3}(I,n)$ was shown in \cite{MR1386845} to be a model for the classifying space of the planar version of Artin's pure braid group on $n$ stands. In more applied terms, no-$k$-equal configuration spaces on graphs can provide a natural model for motion planning problems in digital microfluidics, see~\cite[Section~2.4]{MR2605308}. 

\smallskip
The central result in this paper (Theorem \ref{StabThm} below) gives a discrete model for the homotopy type of no-$k$-equal configuration spaces on any finite connected graph $G$. Namely, starting from the canonical cell structure of $G$, consider the product cell structure on the $n$-th Cartesian product $G^n$. The subspace $\F^k(G,n)$ fails to be a subcomplex, so one takes the largest subcomplex $\DF^k(G,n)$ of $G^n$ contained in $\F^k(G,n)$. This construction was first introduced for $k=2$ in \cite{MR2701024}, where Abrams argues that a suitable subdivision condition on $G$ suffices to show that $\DF^2(G,n)$ sits inside $\F^2(G,n)$ as a strong deformation retract. A homological version of such a result was obtained in \cite{MR3597806} for $k>2$, but a full homotopy statement had remained open. Here we fill in the gap by proving the missing homotopy equivalence. This is attained by imposing a subdivision requirement (Definition \ref{sufficietlysubdivided} below) which is, in fact, much milder than the one imposed by Chettih in her homological analysis. We thus get a computational avenue for studying and putting to work the topology of no-$k$-equal configuration spaces on general graphs.

\begin{definition}\label{sufficietlysubdivided}
A graph $G$ (possibly, in principle, with edge-loops or multiple edges between pairs of vertices) is said to be \emph{$(k,n)$-sufficiently subdivided} (or just \emph{sufficiently subdivided,} when the parameters $k$ and $n$ are implicit), provided the following two conditions hold:
\begin{itemize}
    \item[$(S.1)$] Every path between two essential vertices $u\neq v$ touches at least $n-k+2$ vertices (including $u,v$).
    \item[$(S.2)$] Every essential cycle (i.e.~one that is not homotopically trivial) at a vertex $u$ touches at least $n-k+3$ vertices (including $u$).
\end{itemize}
\end{definition}

Note that the case $k=2$ in Definition \ref{sufficietlysubdivided}
agrees with the condition conjectured in page 12 of Abrams' Ph.D.~thesis as the best possible requirement implying stabilization.

We remark that, for the purposes of this work, an essential vertex $v$ of a graph $G$ is characterized by the condition $\deg(v)\neq2$, where $\deg(v)$ stands for the number of connected components obtained by removing $v$ from any suitably small neighborhood of $v$ in $G$. Equivalently, in combinatorial terms, $\deg(v)$ is the number of edges incident to $v$, in the understanding that a loop-edge at $v$ contributes by 2 in $\deg(v)$. For instance, the essential vertices of the graph
$$
\begin{tikzpicture}
    \tikzset{Bullet/.style={fill = black, draw, color=#1, outer sep = 2, circle, minimum size = 1pt, scale = 0.5}}
    \node[Bullet=black, label=above:$a$] (d1) at (2.5,1) {};
    \node[Bullet=black, label=above:$b$] (e1) at (4,1) {};
    \node[Bullet=black, label=above:$c$] (f1) at (5.5,1) {};
    \path [-] (d1) edge (e1);
    \path [-] (e1) edge[bend left=40] (f1);
    \path [-] (e1) edge[bend right=40] (f1);
\end{tikzpicture}
$$
are $a$ and $b$, so the graph is not $(n-1,n)$-sufficiently subdivided, but becomes so after inserting at least one vertex on the edge $ab$, and at least two vertices on the essential loop at~$b$. On the other hand, any $(k,n)$-sufficiently subdivided graph with $n\geq k$ must be simple, i.e., has no edge-loops nor multiple edges, while any simple graph is $(n,n)$-sufficiently subdivided.

\begin{theorem}[Abrams' stabilization theorem for no-$k$-equal configuration spaces]\label{StabThm} For a $(k,n)$-sufficiently subdivided finite connected graph $G$, the inclusion $$\DF^k(G,n)\hookrightarrow\F^k(G,n)$$ is a homotopy equivalence.
\end{theorem}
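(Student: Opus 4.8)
The plan is to exhibit $\F^k(G,n)$ as an increasing union of Abrams-type subcomplexes and then collapse each of them onto $\DF^k(G,n)$. For $j\ge0$ let $G_j$ be the subdivision of $G$ obtained by inserting $j$ new vertices in the interior of every edge, so $G_0=G$. Subdividing an edge inserts a degree-$2$ vertex and raises by one the number of vertices lying on each path or essential cycle through that edge, so every $G_j$ is again $(k,n)$-sufficiently subdivided; moreover $\DF^k(G_j,n)$ is a finite subcomplex of $G_j^n$ which, as a subspace, lies in $\F^k(G,n)$, with $\DF^k(G_i,n)\subseteq\DF^k(G_j,n)$ for $i\le j$ and all these inclusions cofibrations. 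A point-set lemma identifies $\F^k(G,n)$ with $\bigcup_j\big|\DF^k(G_j,n)\big|$ carrying the colimit topology: any configuration with no $k$-fold collision lies in a product cell of a fine enough $G_j^n$ all of whose closed factors cover each point of $G$ at most $k-1$ times, and this is uniform on compact subsets because the $k$-fold fat diagonal is closed and misses every compact subset of $\F^k(G,n)$. Hence, once each inclusion $\DF^k(G,n)\hookrightarrow\DF^k(G_j,n)$ is shown to be a homotopy equivalence, so is $\DF^k(G,n)\hookrightarrow\F^k(G,n)$. Telescoping through a chain $G=H_0\subset H_1\subset\cdots\subset H_N=G_j$ in which each $H_{i+1}$ subdivides one edge of $H_i$ once — every $H_i$ still sufficiently subdivided, since the essential vertices never change — reduces everything to a single step.

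\emph{One step, via discrete Morse theory.} It remains to prove: if $H$ is $(k,n)$-sufficiently subdivided and $H'$ is $H$ with one edge $e=[u,v]$ subdivided by a single new vertex $w$, then $\DF^k(H,n)\hookrightarrow\DF^k(H',n)$ is a homotopy equivalence. Regarding $\DF^k(H,n)$ as the subdivided subcomplex $\bsd\DF^k(H,n)$ of $\DF^k(H',n)$, I prove the stronger statement that $\DF^k(H',n)$ collapses onto it. Writing a cell of $\DF^k(H',n)$ as a product $\prod_i\bar c_i$ of closed cells of $H'$, a short computation shows it already lies in $\bsd\DF^k(H,n)$ exactly when replacing each factor $c_i\in\{\{w\},e_1,e_2\}$ by the whole edge $e$ still yields a valid cell of $H^n$ — equivalently, when the ``crowding'' at $u$ and at $v$ is in each case at most $k-1$. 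The complementary cells, the \emph{new} cells, turn out to be closed under passing to cofaces, so it suffices to produce a perfect acyclic matching on them; Forman's collapsing theorem then gives $\DF^k(H',n)\searrow\bsd\DF^k(H,n)$, and in particular the desired homotopy equivalence.

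\emph{The matching.} I construct it by a Farley--Sabalka-type rule localized at $e$: fix an order of the coordinates and, in a new cell, reduce the first ``active'' coordinate near $e$ — sliding a factor between $\{w\}$ and one of $e_1,e_2$, or between $u,v$ and the edges incident to them — in the unique direction that lowers the crowding, declaring a coordinate ``blocked'' when no valid slide exists and, in that case, cascading to the coordinate that blocks it. Matched cells differ in a single factor; acyclicity comes from a monovariant measuring the total displacement of the factors toward their targets along the path or cycle through $e$; and by design the unmatched cells are exactly those admitting no reduction, which one identifies with the cells of $\bsd\DF^k(H,n)$.

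\emph{Main obstacle.} The crux — and the only place the hypotheses are used — is that the matching be \emph{perfect}: no new cell may survive as critical, i.e.\ every new cell must admit a reduction, possibly after cascading. This genuinely fails without sufficient subdivision: for the digon, subdividing one edge turns $\DF^2$ from two points into a hexagon, whose ten new cells admit no perfect matching — already $\chi$ forbids it. Conditions $(S.1)$ and $(S.2)$ — a path between essential vertices touching at least $n-k+2$ vertices, and an essential cycle at least $n-k+3$ — are exactly what forces the cascade to terminate at an unoccupied slot rather than running off the end of the path or cycle; concretely, $n$ particles subject to the no-$k$-equal constraint cannot pile up so densely along a path or cycle of that length as to block all reductions. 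Carrying out this packing estimate is the technically delicate step, and it is precisely where conditions $(S.1)$ and $(S.2)$ do their work — and, for $k=2$, are seen to be the sharp bounds Abrams conjectured.
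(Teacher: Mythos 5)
Your outer layer is essentially the paper's own reduction: exhaust $\F^k(G,n)$ by discrete models of finer and finer subdivisions, control the colimit topology on compact pieces, and reduce to a single edge-subdivision step, to be handled by a perfect acyclic matching on the ``new'' cells (these are indeed closed under cofaces, so the collapse follows as in Lemma~\ref{MorseLemma}). The genuine gap is that the single step --- which is the substance of the theorem, namely Theorem~\ref{StabDiscThm} --- is only asserted, not proved. Your matching rule (``slide the first active coordinate, cascade when blocked'') is never made precise enough to check the three things that actually need proof: (i) that the rule is an involution, i.e.\ that the two cells of a proposed pair select each other --- in the paper this requires the rank function and the stability statements Proposition~\ref{CorFaceOp} and Corollary~\ref{FieldWD}; (ii) that \emph{every} new cell gets matched, which is exactly where $(S.1)$ and $(S.2)$ enter --- in the paper this is the counting argument of Proposition~\ref{RankWD} (with separate path and cycle cases) together with Lemma~\ref{paraelcampo} --- and which you explicitly defer (``carrying out this packing estimate is the technically delicate step''); and (iii) acyclicity. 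For (iii), your proposed ``monovariant measuring total displacement'' cannot be taken on faith: along a directed path in the modified Hasse diagram the rank genuinely increases as well as decreases (case (A) versus cases (C)--(D) of Lemma~\ref{AcyclicLemma}), and the paper's acyclicity proof is not a monotonicity argument but a finer analysis (Lemma~\ref{AcyclicLemma}, Corollary~\ref{ConstRank}, Proposition~\ref{ConstDiff}) that tracks the coordinate where the top-rank edge was inserted and derives a contradiction with externality via Lemma~\ref{spelled}. Declaring that some rule with properties (i)--(iii) exists is assuming precisely what has to be shown; your digon example shows the statement is delicate, but it does not supply the argument.

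Two smaller points. With $G_j$ defined by inserting $j$ equally spaced vertices in the interior of every edge, the inclusions $\DF^k(G_i,n)\subseteq\DF^k(G_j,n)$ you rely on are false in general, because $G_j$ is then not a subdivision of $G_i$: already for $G$ a single edge and $n=k=2$, the point $(1/3,\,7/10)$ lies in $\DF^2(G_2,2)$ but not in $\DF^2(G_3,2)$, since its carrier in $G_3^2$ is $(1/4,1/2)\times(1/2,3/4)$, whose closed factors meet at $1/2$. You need a nested family, e.g.\ the iterated barycentric subdivisions used in Corollary~\ref{weakmaintheorem}. Granting that, your idea of deducing a genuine homotopy equivalence directly from a colimit of cofibrations that are deformation retracts is legitimate and would even spare the paper's last step (weak equivalence plus the CW homotopy type of $\F^k(G,n)$ plus Whitehead), provided the identification of $\F^k(G,n)$ with the colimit is actually proved rather than gestured at; but none of this repairs the missing core.
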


As a result, the homotopy type of $\F^k(G,n)$ can be assessed through the discrete model $\DF^k(G,n)$ via techniques of discrete Morse theory. In particular, it is very promising to explore the existence and properties of a no-$k$-equal analogue of Farley-Sabalka discrete gradient field in \cite{MR2171804}. In fact, our proof of Theorem~\ref{StabThm} was motivated by the discrete Morse theoretic argument in \cite{MR3276733} that proves Abrams' stabilization theorem for $k=2$. We thank Safia Chettih for pointing us at the work of Prue and Scrimshaw.

\section{Subdivision} 
From now on $G=(V,E)$ stands for a finite connected graph with vertex set $V$ and edge set~$E$. As indicated in the introduction, we consider the product cell structure on $G^n$ coming from the standard cell structure on~$G$. As an abuse of notation, a cell $x=x_1\times\cdots\times x_n$ of $G^n$, where each $x_i$ is either a vertex or an edge, will generically be denoted as $x=(x_1,\ldots,x_n)$. Further, in what follows we will choose orientations of some of the edges of $G$. An oriented edge between vertices $u$ and $v$ will be referred to as $uv$ to indicate that the chosen orientation goes from $u$ to $v$, and we will use the notations $\iota(uv)=u$ and $\tau(uv)=v$ for the source and target of the oriented edge. Of course, the notation $uv$ is not sound under the presence of multiple edges, but for practical purposes this will never be the case (in view of the subdivision hypothesis for $G$ in Theorem \ref{StabThm}).

\smallskip
Fix vertices $u,v\in V$ of $G$ determining an edge $uv\in E$. The subdivided graph $G(w,uv)$, also denoted for short as $G'$ when the parameters $w$, $u$ and $v$ are implicit, is obtained from $G$ by inserting a vertex $w\not\in V$ on the edge $uv$, while the edge $uv$ is replaced by the couple of edges $wu$ and $wv$. Explicitly, $G(w,uv)$ has vertices $V':=V\cup \{w\}$ and edges $E':=(E-\{uv\})\cup \{wu, wv\}$.

\smallskip
The technical proof of the following key result lies at the core of this work:

\begin{theorem}[Stabilization of sufficiently subdivided discrete models]\label{StabDiscThm} In the situation above, there is a subcomplex $Y\hookrightarrow \DF^k(G'n)$ such that, as topological spaces, $Y\cong \DF^k(G,n)$. Furthermore, if $G$ is $(k,n)$-sufficiently subdivided, then $Y$ is a strong deformation retract of $\DF^k(G'n)$.
\end{theorem}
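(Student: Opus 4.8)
The statement splits into a construction and a collapse. For the first assertion I would work from the combinatorial description of discrete no-$k$-equal complexes: a cell $c=(c_1,\dots,c_n)$ of $G^n$, with each $c_i$ a vertex or an edge, lies in $\DF^k(G,n)$ precisely when no $k$ of the closed cells $\overline{c_1},\dots,\overline{c_n}$ share a common point. Let $h\colon|G'|\to|G|$ be the canonical homeomorphism straightening out the subdivided edge --- the identity off the arc $u\,w\,v$, carrying that arc onto the closed edge $uv$ with $w$ going to an interior point --- and let $\rho$ be the induced order-preserving map of face posets, given coordinatewise by $\rho(wu)=\rho(wv)=\rho(w)=uv$ and the identity on all other cells. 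I would then set
$$Y=\bigl\{\,c\in\DF^k(G',n)\ :\ \rho(c)\in\DF^k(G,n)\,\bigr\}.$$
Since $\rho$ is order-preserving and $\DF^k(G,n)$ is a subcomplex of $G^n$, the set $Y$ is a subcomplex of $\DF^k(G',n)$. The homeomorphism $Y\cong\DF^k(G,n)$ is then $h^n$ restricted to $|Y|$: one checks that $h$ sends the relative interior of each cell of $G'$ into the relative interior of its $\rho$-image, whence $h^n(\overline c)\subseteq\overline{\rho(c)}$ for every cell $c$ and carriers are respected. The inclusion $h^n(|Y|)\subseteq|\DF^k(G,n)|$ is then immediate, and the reverse inclusion follows because $h^n$ preserves the subspace $\F^k(G',n)=\F^k(G,n)$, so that the preimage of a closed cell of $\DF^k(G,n)$ is a union of closed cells lying in $Y$. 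All of this is routine once the common-point characterization of $\DF^k$ is in place.

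For the deformation retraction I would invoke the basic collapsing criterion of discrete Morse theory: if the regular CW complex $X:=\DF^k(G',n)$ carries an acyclic partial matching on its face poset whose unmatched cells are exactly the cells of $Y$, then $X$ collapses onto $Y$, and in particular $Y$ is a strong deformation retract of $X$. First one identifies $X\smallsetminus Y$: these are the cells $c$ whose coarsening $\rho(c)$ develops a $k$-fold coincidence, and a short analysis shows that any such $c$ must have a coordinate in the ``subdivision zone'' $Z=\{wu,w,wv\}$, while the coincidence --- which cannot sit at $w$ itself, since $k$ coordinates clustered there would already be forbidden in $G'$ --- is created at $u$ or at $v$ by a cluster of $k-1$ further coordinates lying on or adjacent to that vertex once $w$ is forgotten. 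The matching is then a ``slide across $w$'': in the $Z$-coordinate of least index, push $w$ up to $wu$ or to $wv$ (raising dimension), or conversely push $wu$ or $wv$ down to $w$ (lowering dimension), the direction $wu$ versus $wv$ being fixed by a priority rule and a validity test. The crux is that a valid slide is always available: if the slide toward $u$ were blocked, at least $k-1$ coordinates would sit at or next to $u$, and symmetrically for $v$, so that together with the coordinate at $w$ too many of the $n$ particles would be forced into the immediate neighbourhood of the edge $uv$; this is exactly ruled out by clause $(S.1)$ of Definition~\ref{sufficietlysubdivided} --- which supplies $n-k+2$ vertices along the path through $uv$ when both endpoints lie on a path between essential vertices --- and by clause $(S.2)$ when $uv$ lies instead on a short essential cycle (these two configurations exhaust the possibilities, since a maximal chain of degree-$2$ vertices either terminates at essential vertices or closes up into an essential cycle). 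Hence the matching is everywhere defined on $X\smallsetminus Y$, it keeps matched pairs inside $X\smallsetminus Y$, and exactly the cells of $Y$ remain uncovered.

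It then remains to verify that the matching is a genuine one --- a slide alters a single coordinate and preserves both membership in $X\smallsetminus Y$ and the index of the least $Z$-coordinate, so the two cells of a pair are matched to each other --- and that it is acyclic. For acyclicity I would produce a discrete Morse function, equivalently a potential that strictly decreases along every nontrivial gradient step: order cells lexicographically by the positions of their $Z$-coordinates relative to $w$, refined by dimension, so that a step $c\searrow c'\nearrow c''$ strictly lowers the potential and no closed $V$-path can occur. An appeal to the collapse theorem then completes the proof.

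The hard part, absorbing the bulk of the work, is the middle step: designing the slide so that it is simultaneously everywhere defined on $X\smallsetminus Y$ --- this is where the exact thresholds $n-k+2$ and $n-k+3$ of Definition~\ref{sufficietlysubdivided} get pinned down against the worst-case clusters, and where the dichotomy ``$uv$ on a path'' versus ``$uv$ on an essential cycle'' forces the two separate clauses --- consistent as a matching, and acyclic. Once those three properties are secured, the passage to a strong deformation retract is a black-box application of discrete Morse theory.
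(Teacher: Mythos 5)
Your first step (the construction of $Y$ via the coarsening map and the identification $Y\cong\DF^k(G,n)$ by subdividing cells) is sound and is essentially the paper's deflation/inflation argument, including the characterization of the cells of $Y$ as those whose coarsening stays in $\DF^k(G,n)$ (Lemma \ref{ExtCell}). The gap is in the matching. Your pairing only ever ``slides across $w$'', i.e.\ toggles $w\leftrightarrow wu$ or $w\leftrightarrow wv$ in a chosen coordinate, and the whole construction rests on the claim that for every external cell at least one of the two slides is valid. That claim is false. Take an external cell with one coordinate at $w$, a cluster of $k-1$ coordinates whose closures touch $u=v_1$, and a cluster of $k-1$ coordinates whose closures touch $v=v_{i+1}$ (already for $k=2$ the cell $(w,u,v,\dots)$ does the job): pushing $w$ to $wu$ creates a $k$-fold collision at $u$, pushing it to $wv$ creates one at $v$, so both slides are blocked, yet the cell is external and must be matched. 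Your counting argument does not rescue this: blocking both directions only forces about $2k-1$ particles near the single edge $uv$, which is perfectly compatible with $n\geq 2k-1$, and conditions $(S.1)$--$(S.2)$ bound the number of vertices on primitive paths and cycles, not the local crowding at two adjacent vertices. This is precisely why the paper does not match at $w$ alone: it introduces the rank function along the \emph{entire} primitive path or cycle $H$ containing the subdivided edge, locating the first vertex $v_j$ at which every incident coordinate is already the edge $e_j$, and matches by toggling $\iota(e_j)\leftrightarrow e_j$ in the last such coordinate --- an operation that can take place arbitrarily far from $w$ (in the example above it pushes the particle at $u$ onto $e_2$, away from $w$). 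The subdivision hypothesis enters exactly to show the rank is well defined (Proposition \ref{RankWD}), via a count of particles along all of $H$, not just near $uv$.

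A secondary but also real issue is acyclicity: your proposed potential (lexicographic positions of the $Z$-coordinates refined by dimension) is not shown to decrease along gradient paths, and once the matching is forced to act at varying positions $e_j$ along $H$ (as it must, by the above), controlling how the rank and the relevant counts change along a $V$-path becomes the technical heart of the proof; in the paper this occupies Lemma \ref{AcyclicLemma}, Corollary \ref{ConstRank} and Proposition \ref{ConstDiff}, and is not a routine lexicographic argument. So the overall architecture (Morse matching on the external cells, collapse onto $Y$ via Lemma \ref{MorseLemma}) is the right one, but the specific matching you propose cannot be everywhere defined, and repairing it leads precisely to the rank-based construction of the paper.
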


Informally, $\DF^k(G,n)$ is a strong deformation retract of $\DF^k(G',n)$ provided $G$ is $(k,n)$-sufficiently subdivided.

\begin{corollary}\label{weakmaintheorem}
If $G$ is $(k,n)$-sufficiently subdivided, then the inclusion
$$\iota\colon \DF^k(G,n)\hookrightarrow\F^k(G,n)$$
is a weak homotopy equivalence.
\end{corollary}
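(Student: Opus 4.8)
The plan is to deduce Corollary~\ref{weakmaintheorem} from Theorem~\ref{StabDiscThm} by an iteration along a sequence of subdivisions that eventually produces a graph to which Abrams' original argument (in the homotopy-theoretic form we are after) applies, and then to invoke the fact that the full configuration space $\F^k$ is insensitive to subdivision. Concretely, I would first observe that subdividing an edge does not change the homeomorphism type of $G$, hence $\F^k(G,n)\cong\F^k(G',n)$ canonically and compatibly with the inclusions of the discrete models; this reduces everything to comparing the discrete models for $G$ and for a sufficiently fine subdivision $\widetilde G$ of $G$.

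Next I would show that any finite connected graph $G$ admits a finite sequence of single-edge subdivisions $G=G_0,G_1,\dots,G_N=\widetilde G$ in which $\widetilde G$ is so finely subdivided that $\DF^k(\widetilde G,n)=\F^k(\widetilde G,n)$ up to the relevant identification, i.e.\ the discrete model already \emph{is} the configuration space after enough subdivision. (This is the no-$k$-equal analogue of the classical statement that, once every edge-path between essential vertices is long enough — on the order of $n$ — the cubical complex $\DF$ deformation retracts onto, in fact coincides up to homotopy with, $\F$; one can cite or adapt the relevant portion of the homological analysis of Chettih \cite{MR3597806}, or establish it directly by noting that in a finely subdivided graph every configuration can be isotoped into the "discretized" position.) The key point for the induction is that each intermediate $G_i$ inherits the hypothesis of being $(k,n)$-sufficiently subdivided once $G=G_0$ is: inserting a vertex on an edge only lengthens paths and cycles, so conditions $(S.1)$ and $(S.2)$ are preserved under subdivision. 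Therefore Theorem~\ref{StabDiscThm} applies at each stage, giving that $\DF^k(G_i,n)$ is a strong deformation retract of $\DF^k(G_{i+1},n)$, and hence in particular the inclusion $\DF^k(G_i,n)\hookrightarrow\DF^k(G_{i+1},n)$ is a homotopy equivalence. Composing these $N$ equivalences yields that $\DF^k(G,n)\hookrightarrow\DF^k(\widetilde G,n)$ is a homotopy equivalence.

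Finally I would assemble the commuting square
\[
\begin{CD}
\DF^k(G,n) @>>> \DF^k(\widetilde G,n)\\
@VVV @VVV\\
\F^k(G,n) @>\cong>> \F^k(\widetilde G,n)
\end{CD}
\]
in which the bottom map is the canonical homeomorphism induced by subdivision, the top map is the homotopy equivalence just constructed, and the right-hand vertical map is a homotopy equivalence by the choice of $\widetilde G$ (the discrete model coincides, up to homotopy, with the configuration space for a sufficiently fine graph). Two-out-of-three then forces the left-hand vertical map $\iota\colon\DF^k(G,n)\hookrightarrow\F^k(G,n)$ to be a weak homotopy equivalence, which is the assertion. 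The main obstacle — and the step I would spend the most care on — is the claim that a sufficiently fine subdivision $\widetilde G$ has its discrete model homotopy equivalent to (indeed capturing all of) $\F^k(\widetilde G,n)$: one must produce an explicit deformation of an arbitrary no-$k$-equal configuration into discretized position and verify that it avoids $k$-fold collisions throughout, which is precisely where the quantitative bounds $n-k+2$ and $n-k+3$ in Definition~\ref{sufficietlysubdivided} enter (there must be enough room on every path and cycle to slide the at most $n$ points past one another without ever creating a $k$-fold coincidence). The rest of the argument is formal diagram-chasing together with the already-established Theorem~\ref{StabDiscThm}.
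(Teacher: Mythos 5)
There is a genuine gap, and it sits exactly where you say you would "spend the most care": the claim that a sufficiently fine subdivision $\widetilde G$ has $\DF^k(\widetilde G,n)=\F^k(\widetilde G,n)$ (or even just $\DF^k(\widetilde G,n)\hookrightarrow\F^k(\widetilde G,n)$ a homotopy equivalence). The literal equality is false for \emph{every} finite subdivision: $\DF^k(\widetilde G,n)$ is a compact subcomplex, while $\F^k(\widetilde G,n)$ is not compact; concretely, a configuration placing $k$ distinct points in the interior of a single edge of $\widetilde G$ lies in $\F^k$ but its carrier cell contains a $k$-fold collision at an endpoint, so it never lies in the discrete model, no matter how fine the subdivision. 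The weakened form of your claim --- that for fine enough $\widetilde G$ the inclusion of the discrete model is a homotopy equivalence --- is precisely the corollary you are trying to prove, applied to $\widetilde G$ instead of $G$; so the argument as structured is circular unless you supply an independent proof of that base case. The two sources you point to do not supply it: Chettih's result is homological only (the paper stresses that the full homotopy statement is exactly what had remained open), and "isotope an arbitrary configuration into discretized position while avoiding $k$-fold collisions" is Abrams-style continuous deformation, i.e.\ the whole analytic difficulty that the discrete-Morse approach of the paper is designed to bypass. The part of your argument that is fine is the bookkeeping: single-edge subdivision preserves conditions $(S.1)$--$(S.2)$, so Theorem~\ref{StabDiscThm} can indeed be iterated to show that $\DF^k(G,n)$ includes into each finer discrete model as a deformation retract, and $\F^k$ is unchanged by subdivision.

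The paper closes the gap you leave open by never asking any single discrete model to capture all of $\F^k(G,n)$. It takes the iterated barycentric subdivisions $B_j$ of $G$ and the increasing chain of compact subcomplexes $D_j=\DF^k(B_j,n)$, observes that these cover $\F^k(G,n)$ and that $\F^k(G,n)$ carries the colimit topology of the chain, so every compact subset (the image of a sphere, or of a null-homotopy) lands in some finite stage $D_j$; then Theorem~\ref{StabDiscThm}, iterated as in your own reduction, retracts $D_j$ onto $\DF^k(G,n)$, giving injectivity and surjectivity of $\iota_q$ on all homotopy groups. In other words, the "fine enough to swallow everything" step is replaced by a compactness/colimit argument at the level of individual maps, which is why no continuous discretization of configurations is ever needed.
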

\begin{proof}
For an integer $j\geq0$, let $B_j$ stand for the iterated $j$-fold barycentric subdivision of~$G$, where $B_0=G$. The subspaces $$D_j:=\DF^k(B_j,n)$$ are compact and cover $\F^k(G,n)$ so, as observed in \cite[Example 1.3.9]{MR2456045}, $\F^k(G,n)$ carries the colimit topology determined by the sequence $$\cdots\subseteq D_j\subseteq D_{j+1}\subseteq\cdots.$$
In particular, \cite[Proposition 1.4.5]{MR2456045} yields that any compact subspace of $\F^k(G,n)$ is contained in some $D_j$, which implies in turn that $\iota_q\colon \pi_q(\DF^k(G,n))\rightarrow\pi_q(\F^k(G,n))$ is an isomorphism (with respect to any base point) for any $q\geq0$. For instance, if a homotopy class
\begin{equation}\label{enelnucleo}
[\alpha:S^q\rightarrow \DF^k(G,n)]\in \pi_q(\DF^k(G,n))
\end{equation}
is trivial on $\pi_q(\F^k(G,n))$, then a relevant based null-homotopy $S^q\times[0,1]\rightarrow\F^k(G,n)$ takes values in some $D_j$, so that the triviality of (\ref{enelnucleo}) follows from Theorem \ref{StabDiscThm}. Surjectivity of $\iota_q$ is proved similarly.
\end{proof}

\begin{proof}[Proof of Theorem \ref{StabThm}]
By Corollary \ref{weakmaintheorem} and Whitehead's theorem, it suffices to observe that $\F^k(G,n)$ has the homotopy type of a cell complex. 

Fix a linear order on the vertices of $G$, and consider the corresponding ordered simplicial structure on~$G^n$. Thus $V^n$ is the vertex set of $G^n$, while $d$-dimensional simplices are given by sets of cardinality $d+1$
\begin{equation}\label{lossimplejos}
\{(v_{0,1},\ldots,v_{0,n}), (v_{1,1},\ldots,v_{1,n}),\ldots,(v_{d,1},\ldots,v_{d,n})\}
\end{equation}
for vertices $v_{i,j}\in V$ such that $v_{0,j}\leq v_{1,j}\leq\cdots\leq v_{d,j}$ and $\{v_{0,j},v_{1,j},\ldots,v_{d,j}\}$ is a face of~$G$ for all $j$ (of course $v_{r,j}=v_{s,j}$ can hold for $r\neq s$). Then, as observed in \cite{MR3887191} for $k=2$, the $k$-fat diagonal $\Delta^n_{k} = G^n-\F^k(G,n)$ is given by the subcomplex of $G^n$ consisting of simplices (\ref{lossimplejos}) for which there are coordinates $1\leq j_1<j_2<\cdots<j_k\leq n$ with 
$$
(v_{0,j_1},v_{1,j_1},\cdots,v_{d,j_1})=(v_{0,j_2},v_{1,j_2},\cdots,v_{d,j_2})=\cdots=(v_{0,j_k},v_{1,j_k},\cdots,v_{d,j_k}).
$$
The desired observation, i.e.~the fact that $\F^k(G,n)=G^n-\Delta_k^n$ has the homotopy type of a cell (actually simplicial) complex , then follows from \cite[Lemma 4.2]{MR3887191} or, alternatively, from \cite[Lemma 70.1]{MR0755006} ---after a barycentric subdivision, to make sure that $\Delta_k^n$ is a full subcomplex of $G^n$.
\end{proof}

In constructing the subdivided graph $G(w,uv)$, it is convenient to distinguish the particular conditions under which the additional vertex $w$ is inserted to $G$. The following notation will be in force throughout the rest of the paper.

\begin{definition}[Primitive graph] \label{def:prim_graph}
 Given a connected graph $\Gamma$, the primitive graph $P({\Gamma})$ is constructed as follows:
\begin{itemize}
    \item[(a)] If ${\Gamma}$ is a cycle $C$, we select a vertex $u$ of $C$, and we remove all other vertices of $C$ keeping a single edge-loop at $u$.
    \item[(b)] If ${\Gamma}$ is not a cycle, we iteratively remove each degree-2 vertex $v$ in such a way that the given pair of edges $uv$ and $vw$ are replaced by a single (unsubdivided) edge $uw$.
\end{itemize}
\end{definition}
\begin{figure}
    \centering
\begin{tikzpicture}
    \tikzset{Bullet/.style={fill = black, draw, color=#1, outer sep = 2, circle, minimum size = 1pt, scale = 0.5}}
    \node[Bullet=black] (a) at (0,0) {};
    \node[Bullet=black] (b) at (1,1) {};
    \node[Bullet=black] (c) at (2,2) {};
    \node[Bullet=black] (d) at (0,4) {};
    \node[Bullet=black] (e) at (3.5,2) {};
    \node[Bullet=black] (f) at (5,2) {};
    \node[Bullet=black] (g) at (6.5,3) {};
    \node[Bullet=black] (h) at (6.5,1) {};

    \path [-] (a) edge (b);
    \path [-] (b) edge (c);
    \path [-] (c) edge (d);
    \path [-] (c) edge (e);
    \path [-] (e) edge (f);
    \path [-] (e) edge (f); 
    \path [-] (c) edge[bend left=50] (f); 
    \path [-] (f) edge (g);
    \path [-] (f) edge (h);
    \path [-] (h) edge (g);

    \node[Bullet=black] (pa) at (9,1.1) {};
    \node[Bullet=black] (pc) at (10,2) {};
    \node[Bullet=black] (pd) at (9,2.9) {};
    \node[Bullet=black] (pf) at (12,2) {};

    \path [-] (pa) edge (pc);
    \path [-] (pc) edge (pd);
    \path [-] (pc) edge[bend right=30] (pf); 
    \path [-] (pc) edge[bend left=30] (pf); 
    \path [-] (pf) edge[out=42,in=317,looseness=22] (pf);
\end{tikzpicture}
\caption{A graph ${\Gamma}$ (left) and its primitive graph $P({\Gamma})$ (right)}
\end{figure}

Vertices (edges) of $P({\Gamma})$ are called primitive vertices (edges) of ${\Gamma}$. Note that any non-primitive vertex of ${\Gamma}$ has degree 2, while the only situation where a primitive vertex has degree 2 holds when ${\Gamma}$ is a cycle, i.e., in item \emph{(a)} of Definition \ref{def:prim_graph}. Indeed, ${\Gamma}$ can be recovered from $P({\Gamma})$ through an iterative subdivision process $$P({\Gamma})={\Gamma}_0,{\Gamma}_1,\ldots,{\Gamma}_m={\Gamma}$$ with ${\Gamma}_\ell'={\Gamma}_{\ell+1}$. So, in what follows, we think of a primitive edge $\varepsilon$ of ${\Gamma}$ as the actual sequence of edges of ${\Gamma}$ that cover $\varepsilon$. Under this rationale, when we have a subdivided graph $G'=G(w,uv)$ of $G$, so that $P(G)=P(G')$, two scenarios arise depending on the primitive edge on which the new vertex $w$ gets inserted. Namely, either $w$ is inserted on a primitive edge between two different primitive vertices, or else, $w$ is inserted on a primitive loop. Note that, in the latter case, the loop can be based at a degree-2 vertex of $P(G)$ (when $G$ is a cycle). At the level of $G$, this means that either (A) $w$ is inserted on a combinatorial\footnote{Here, by a combinatorial path or cycle, we mean an ordered sequence of vertices such that any two consecutive ones support an edge.} path $P$ between two different primitive vertices, or else, (B) $w$ is inserted on a combinatorial cycle $C$ of $G$ that contains a single primitive vertex.

\smallskip
In case (A), let $l$ denote the number of vertices of $P$, so that the corresponding subdivided path $P'$ in $G'$ has $l+1$ vertices. We then label vertices so that
$$P=v_i,v_{i-1},\dots,v_1,v_{i+1},v_{i+2},\dots,v_l$$
where $v_i$ and $v_l$ are primitive, and the additional vertex $w$ is to be inserted in between $v_1$ and $v_{i+1}$ ($1\leq i\leq l-1$). Thus
$$P'=v_i,v_{i-1},\dots,v_1,w,v_{i+1},v_{i+2},\dots v_l$$
with the edge $a:=v_1v_{i+1}$ of $G$ replaced by the edges $wv_1$ and $wv_{i+1}$ of $G'$. Paths $P$ in $G$ and $P'$ in $G'$ are depicted below, where dotted segments represent possible edges other than those assembling $P$ or $P'$, some of which might connect $v_i$ to $v_l$ or even connect  one of these two vertices with itself ---if $G$ is not sufficiently subdivided.

\begin{tikzpicture}
\tikzset{Bullet/.style={fill = black, draw, color=#1, outer sep = 2, circle, minimum size = 1pt, scale = 0.5}}
\node[Bullet=black,label=above: {$v_1$}] (pv1) at (-1.4,2.5){};
\node[Bullet=black,label=above: {$v_2$}] (pv2) at (-2.8,2.5){};
\node[Bullet=black,label=above: {$v_{i-1}$}] (pvim1) at (-4.2,2.5){};
\node[Bullet=black,label=above: {$v_i$}] (pvi) at (-5.6,2.5){};
\node[Bullet=black,label=above: {$v_{i+1}$}] (pvi1) at (1.4,2.5){};
\node[Bullet=black,label=above: {$v_{i+2}$}] (pvi2) at (2.8,2.5){};
\node[Bullet=black,label=above: {$v_{l-1}$}] (pvlm1) at (4.2,2.5){};
\node[Bullet=black,label=above: {$v_l$}] (pvl) at (5.6,2.5){};
\node (pextr1) at (-7, 3.5){};
\node (pextr2) at (-7, 1.5){};
\node (pextl1) at (7, 3.5){};
\node (pextl2) at (7, 2.5){};
\node (pextl3) at (7, 1.5){};

    \draw[->] (pv1)  -- node[below]{$a$} (pvi1);
    \draw[->] (pv1)  -- node[below]{$e_2$} (pv2);
    \draw[->] (pv2)  -- node[below]{$\dots $} (pvim1);
    \draw[->] (pvim1)  -- node[below]{$e_i$} (pvi);
    \draw[-, dotted] (pvi)  -- (pextr1);
    \draw[-, dotted] (pvi)  -- (pextr2);
    \draw[->] (pvi1)  -- node[below]{$e_{i+2}$} (pvi2);
    \draw[->] (pvi2)  -- node[below]{$\dots $} (pvlm1);
    \draw[->] (pvlm1)  -- node[below]{$e_l$} (pvl);
    \draw[-, dotted] (pvl)  -- (pextl1);
%    \draw[-, dotted] (pvl)  -- (pextl2);
    \draw[-, dotted] (pvl)  -- (pextl3);
%    \path [-, dotted] (pvi) edge[bend left=35] (pvl);
 %%
\node[Bullet=black,label=above: {$w$}] (pw) at (0,0){};
\node[Bullet=black,label=above: {$v_1$}] (pv1) at (-1.4,0){};
\node[Bullet=black,label=above: {$v_2$}] (pv2) at (-2.8,0){};
\node[Bullet=black,label=above: {$v_{i-1}$}] (pvim1) at (-4.2,0){};
\node[Bullet=black,label=above: {$v_i$}] (pvi) at (-5.6,0){};
\node[Bullet=black,label=above: {$v_{i+1}$}] (pvi1) at (1.4,0){};
\node[Bullet=black,label=above: {$v_{i+2}$}] (pvi2) at (2.8,0){};
\node[Bullet=black,label=above: {$v_{l-1}$}] (pvlm1) at (4.2,0){};
\node[Bullet=black,label=above: {$v_l$}] (pvl) at (5.6,0){};
    \node (pextr1) at (-7, 1){};
    \node (pextr2) at (-7, -1){};
    \node (pextl1) at (7, 1){};
    \node (pextl2) at (7, 0){};
    \node (pextl3) at (7, -1){};
    
    \draw[->] (pw)  -- node[below]{$e_1$} (pv1);
    \draw[->] (pv1)  -- node[below]{$e_2$} (pv2);
    \draw[->] (pv2)  -- node[below]{$\dots $} (pvim1);
    \draw[->] (pvim1)  -- node[below]{$e_i$} (pvi);
    \draw[-, dotted] (pvi)  -- (pextr1);
    \draw[-, dotted] (pvi)  -- (pextr2);
    \draw[->] (pw)  -- node[below]{$e_{i+1}$} (pvi1);    
    \draw[->] (pvi1)  -- node[below]{$e_{i+2}$} (pvi2);
    \draw[->] (pvi2)  -- node[below]{$\dots $} (pvlm1);
    \draw[->] (pvlm1)  -- node[below]{$e_l$} (pvl);
    \draw[-, dotted] (pvl)  -- (pextl1);
%    \draw[-, dotted] (pvl)  -- (pextl2);
    \draw[-, dotted] (pvl)  -- (pextl3);
%    \path [-, dotted] (pvi) edge[bend right=40] (pvl);
\end{tikzpicture}

In case (B), let $l-1$ denote the number of vertices of $C$, so that the corresponding subdivided cycle $C'$ in $G'$ has $l$ vertices. We then label vertices so that
$$C=v_i,v_{i-1},\dots,v_1,v_{i+1},v_{i+2},\dots,v_l$$
where $v_i=v_l$ is primitive, and the additional vertex $w$ is to be inserted in between $v_1$ and $v_{i+1}$ ($1\leq i\leq l-1$). Thus
$$C'=v_i,v_{i-1},\dots,v_1,w,v_{i+1},v_{i+2},\dots v_l$$
where, as before, the edge $a:=v_1v_{i+1}$ of $G$ is replaced by the edges $wv_1$ and $wv_{i+1}$ of $G'$. Cycles $C$ in $G$ and $C'$ in $G'$ are depicted below where, as in case (A), dotted segments represent possible edges not assembling $C$ or $C'$.

\begin{tikzpicture}
\tikzset{Bullet/.style={fill = black, draw, color=#1, outer sep = 2, circle, minimum size = 1pt, scale = 0.5}}
\node[Bullet=black,label=above: {$v_1$}] (cv1) at (-1.4,1.4){};
\node[Bullet=black,label=left: {$v_2$}] (cv2) at (-2,0){};
\node[Bullet=black,label=left: {$v_{i-1}$}] (cvim1) at (-1.4,-1.4){};
\node[Bullet=black,label=above: {$v_i=v_l$}] (cvi) at (0,-2){};
\node[Bullet=black,label=above: {$v_{i+1}$}] (cvi1) at (1.4,1.4){};
\node[Bullet=black,label=right: {$v_{i+2}$}] (cvi2) at (2,0){};
\node[Bullet=black,label=right: {$v_{l-1}$}] (cvlm1) at (1.4,-1.4){};
    
    \node (cext1) at (-1.5, -3){};
    \node (cext2) at (0, -3){};
    \node (cext3) at (1.5, -3){};
    
    \path[-, ->] (cv1)  edge[bend left=30] node[above]{$a$} (cvi1);
    \draw[->] (cv1)  -- node[left]{$e_2$} (cv2);
    \draw[->] (cv2)  -- node[left]{$\dots $} (cvim1);
    \draw[->] (cvim1)  -- node[below]{$e_i$} (cvi);
    \draw[->] (cvi1)  -- node[right]{$e_{i+2}$} (cvi2);
    \draw[->] (cvi2)  -- node[right]{$\dots $} (cvlm1);
    \draw[->] (cvlm1)  -- node[below]{$e_l$} (cvi);
    \draw[-, dotted] (cvi)  -- (cext1);
%    \draw[-, dotted] (cvi)  -- (cext2);
    \draw[-, dotted] (cvi)  -- (cext3);
\node[Bullet=black,label=above: {$w$}] (cw) at (8,2){};
\node[Bullet=black,label=left: {$v_1$}] (cv1) at (6.6,1.4){};
\node[Bullet=black,label=left: {$v_2$}] (cv2) at (6,0){};
\node[Bullet=black,label=left: {$v_{i-1}$}] (cvim1) at (6.6,-1.4){};
\node[Bullet=black,label=above: {$v_i=v_l$}] (cvi) at (8,-2){};
\node[Bullet=black,label=right: {$v_{i+1}$}] (cvi1) at (9.4,1.4){};
\node[Bullet=black,label=right: {$v_{i+2}$}] (cvi2) at (10,0){};
\node[Bullet=black,label=right: {$v_{l-1}$}] (cvlm1) at (9.4,-1.4){};
    
    \node (cext1) at (6.5, -3){};
    \node (cext2) at (8, -3){};
    \node (cext3) at (9.5, -3){};
    
    \draw[->] (cw)  -- node[above]{$e_1$} (cv1);
    \draw[->] (cv1)  -- node[left]{$e_2$} (cv2);
    \draw[->] (cv2)  -- node[left]{$\dots $} (cvim1);
    \draw[->] (cvim1)  -- node[below]{$e_i$} (cvi);
    \draw[->] (cw)  -- node[above]{$\,\,\,e_{i+1}$} (cvi1);    
    \draw[->] (cvi1)  -- node[right]{$e_{i+2}$} (cvi2);
    \draw[->] (cvi2)  -- node[right]{$\dots $} (cvlm1);
    \draw[->] (cvlm1)  -- node[below]{$e_l$} (cvi);
    \draw[-, dotted] (cvi)  -- (cext1);
%    \draw[-, dotted] (cvi)  -- (cext2);
    \draw[-, dotted] (cvi)  -- (cext3);
\end{tikzpicture}

In either case, we write $H$ ($H'$) to stand for either $P$ or $C$ ($P'$ or $C'$). Additionally, we orient the edges $e_s$ ($1\leq s\leq l$) assembling $H'$ as indicated in the figures above. Explicitly, $e_s:=v_{s-1}v_s$ for $s\not\in\{1,i+1\}$, while $e_1:=wv_1$ and $e_{i+1}:=wv_{i+1}$. Note that $e_i$ is an edge of $H$ for $s\not\in\{1,i+1\}$, while $e_1$ and $e_{i+1}$ assemble the edge $a$ of $H$. Furthermore,
\begin{equation}\label{cerrado}
\tau(e_i)=\tau(e_l), \text{\:\:\: i.e., \:\:\:} v_i=v_l
\end{equation}
 when $H=C$.

\smallskip
The following constructions and observations are freely used throughout the rest of the paper in order to organize the proof of Theorem\tilde\ref{StabDiscThm}.
\begin{definition}
For $v\in V$, $e\in E$ and $x=(x_1,\dots ,x_n)$ a cell of $G^n$, set
\begin{equation*} 
v(x):=\{1\leq t\leq n: x_t=v\}, \; e(x):=\{1\leq t\leq n: x_t=e\},
\; v(\overline{x}):=\{1\leq t\leq n: v\in \overline{x_t}\},
\end{equation*}
where $\overline{x_t}$ stands for the closure of the cell $x_t$. The cardinalities of $v(x)$, $e(x)$ and $v(\overline{x})$ are denoted respectively by $\eta(v,x)$, $\eta(e,x)$ and $\eta(v,\overline{x})$. Similarly, we get sets of coordinates $v'(x')$, $e'(x')$ and $v'(\overline{x'})$, and their respective cardinalities $\eta(v',x')$, $\eta(e',x')$ and $\eta(v',\overline{x'})$, for $v'\in V'$, $e'\in E'$ and $x'=(x'_1,\ldots,x'_n)$ a cell of $(G')^n$.
\end{definition}

If $y$ is a face of~$x$, then $v(\overline{y})\subseteq v(\overline{x})$ for each $v\in V$. Furthermore
\begin{equation}\label{pairwisedisj}
\{1,\dots ,n\}=\bigcup_{v\in V}v(x)\cup \bigcup_{e\in E}e(x)\text{\;\;\; and \;\;\;}v(\overline{x})=v(x)\cup \bigcup_{v\in \overline{e}}e(x)
\end{equation}
are both pairwise disjoint unions. (Note that, for a vertex $u$ and an edge $\varepsilon$, the notation $u\in\overline{\varepsilon}$ amounts to saying that $u$ is a vertex of $\varepsilon$.) Consequently, if $y$ is a face of $x$, then $\eta(v,\overline{y})\leq\eta(v,\overline{x})$ for every $v\in V$, while
\begin{equation}\label{varios1}
n=\sum_{v\in V}\eta(v,x)+\sum_{e\in E}\eta(e,x) \text{\:\:\:\:and\:\:\:\:} \eta(v,\overline{x})=\eta(v,x)+\sum_{v\in \overline{e}}\eta(e,x).
\end{equation}
Furthermore, since a non-empty intersection of the closures of any $k$ components of $x$ must contain a vertex, we see that a cell $x$ of\tilde$G^n$ is a cell of $\DF^k(G,n)$ if and only if 
\begin{equation}\label{nocolisions}
\eta(v,\overline{x})\leq k-1
\end{equation}
for every $v\in V$. Corresponding analogues of (\ref{pairwisedisj})--(\ref{nocolisions}) apply in the context of $G'$, where the notation $x'=(x_1',\ldots,x_n')$ is used for cells of $(G')^n$. In particular, we have equalities
$$
\eta(w,\overline{x'})=\eta(w,x')+\eta(e_1,x')+\eta(e_{i+1},x')\:\:\text{and}\:\:
\eta(v_s,\overline{x'})=\eta(v_s,x')+\eta(e_s,x')+\eta(e_{s+1},x')
$$
for $s\neq i,l$. Cases with $s\in\{i,l\}$ ---either in the context of $G$ or $G'$, as the graph structure of $G$ outside $H$ agrees with that of $G'$ outside $H'$--- require considering the collection
$$ext_s(x')\:\:=\bigcup_{e\in G'-H', \,\,v_s\in \overline{e}}e(x')$$
of coordinates $t\in\{1,2,\ldots,n\}$ such that $x'_t$ is an edge of $G'$ not in $H'$ and with $v_s\in \overline{x'_t}$. Thus, when $H'=P'$ (i.e.\tilde$H=P$) and $s\in\{i,l\}$, we have
$$
\eta(v_s,\overline{x'})=\eta(v_s,x')+\eta(e_s,x')+\eta(ext_s,x'),
$$
where $\eta(ext_s,x')$ stands for the cardinality of $ext_s(x')$. Similarly, when $H'=C'$ (i.e. $H=C$), so that $v_0:=v_i=v_l$ and $\eta(ext_0,x'):=\eta(ext_i,x')=\eta(ext_l,x')$, we have
$$
\eta(v_0,\overline{x'})=\eta(v_0,x')+\eta(e_i,x')+\eta(e_l,x')+\eta(ext_0,x').
$$

\begin{remark}{\em
As above, throughout the paper we will abuse notation referring to situations with $H'=P'$ (respectively $H'=C'$) simply as $H=P$ (respectively $H=C$). 
}\end{remark}

\section{The subcomplex}
The preliminary constructions in the previous section allow us to give a precise description of the subcomplex $Y$ of $\DF^k(G',n)$ in Theorem \ref{StabDiscThm}, and establish the asserted homeomorphism $Y\cong\DF^k(G,n)$.

\begin{definition}[Deflation/inflation]\label{deflainfla}
    \begin{itemize}
        \item[(a)] The \emph{deflation set} $\widecheck{x}$ of a cell $x=(x_1,\dots ,x_n)$ of $\DF^k(G,n)$ consists of all cells  $x'=(x'_1,\dots ,x'_n)$ of $(G')^n$ that can be obtained from $x$ by replacing each coordinate $x_t$ of $x$ having $x_t=a$ by one of the vertices $v_1, w, v_{i+1}$ or one of the edges $e_1, e_{i+1}$ (no relashionship is assumed among $x'_{t_1}$ and $x'_{t_2}$ when $x_{t_1}=a=x_{t_2}$, other than both $x'_{t_1}$ and $,x'_{t_2}$ must be chosen from $\{v_1, w, v_{i+1},e_1, e_{i+1}\})$.
        \item[(b)]  The \emph{inflation} of a cell $x'=(x'_1,\dots ,x'_n)$ of $\DF^k(G'n)$ is the cell $\widehat{x'}=(\widehat{x'}_1,\dots ,\widehat{x'}_n)$ given by:
        $$\widehat{x'}_j = \begin{cases}
            a, \text{ if } x'_j\in \{w, e_1, e_{i+1}\},\\
            x'_j, \text{ otherwise.}
        \end{cases}$$
    \end{itemize}
\end{definition}

\begin{example}\label{infdefexample}{\em
The deflation process in Definition \ref{deflainfla}\emph{(a)} is multi-valued, and every cell $x'\in \widecheck{x}$ is a cell of $\DF^k(G',n)$. On the other hand, the (unique) inflated cell in \emph{(b)} may fail to be a cell of $\DF^k(G,n)$ ---see Lemma \ref{ExtCell}. Yet, the inflation/deflation processes are in fact ``identity'' functions on cells that are suitably ``far'' from $w$. Namely, if a cell $x$ of $\DF^k(G,n)$ has $\eta(a,x)=0$, then the deflation set $\widecheck{x}$ is the singleton $\{x\}$, where $x$ is thought of as a cell of $\DF^k(G',n)$ satisfying $\eta(w,\overline{x'})=0$. In fact, by restricting to cells in
$$N_a:=\{x\in\DF^k(G,n)\colon \eta(a,x)=0\} \:\:\:\text{and}\:\:\: N_w:=\{x'\in\DF^k(G',n)\colon \eta(w,\overline{x'})=0\},$$
we see that the deflation/inflation processes are inverses of each other. 
}\end{example}

More generally, in Definition \ref{deflainfla}\emph{(a)}, the closures of the cells in $\widecheck{x}$ yield a subdivision of the closure of\tilde$x$. In particular, as spaces, $\DF^k(G,n)$ agrees with the subcomplex $Y$ of $\DF^k(G',n)$ whose collection of cells is the union of all the sets $\widecheck{x}$ as $x$ runs over all the cells of $\DF^k(G,n)$. 

\section{Preparing the deformation retract}
It remains to show that $Y$ is a strong deformation retract of $\DF^k(G',n)$ whenever $G$ is $(k,n)$-sufficiently subdivided. This is attained through Lemma \ref{MorseLemma} below, using discrete Morse theory, a technique used freely from now on. Discrete-Morse-theory details can be consulted in \cite{MR1612391}. 

\begin{lemma}\label{MorseLemma}
Let $W$ be a discrete gradient field on a finite regular complex $X$. If the $W$-critical cells form a subcomplex $Y$ of $X$, then $X$ strong deformation retracts onto $Y$ through simplicial collapses.
\end{lemma}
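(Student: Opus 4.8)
The plan is to invoke standard discrete Morse theory almost verbatim, the only twist being to track that the collapsing procedure can be carried out so as to fix $Y$ pointwise at every stage, i.e.\ through simplicial collapses that never touch a cell of $Y$.

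First I would recall the structure of a discrete gradient field $W$ on a finite regular CW (here, simplicial/cubical) complex $X$: the cells of $X$ are partitioned into three classes — the \emph{critical} cells, and matched pairs $(\sigma,\tau)$ with $\sigma$ a codimension-one face of $\tau$ — in such a way that the associated modified Hasse diagram has no directed cycles (acyclicity of $V$-paths). By hypothesis the critical cells are exactly the cells of the subcomplex $Y\subseteq X$. I would then appeal to the fundamental theorem of discrete Morse theory (Forman), which produces a sequence of elementary collapses from $X$ down to a complex built out of the critical cells; here, since the critical cells already assemble into the subcomplex $Y$, that "Morse complex" \emph{is} $Y$ itself, so the collapse goes $X\searrow Y$. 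An elementary collapse of a free face realizes a strong deformation retraction, and a finite composite of strong deformation retractions is a strong deformation retraction; hence $X$ strong deformation retracts onto $Y$, and by construction this retraction is realized through simplicial (elementary) collapses, which is exactly the assertion.

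More concretely, the cleanest way to run the induction is on the number of matched pairs. Pick a matched pair $(\sigma,\tau)$ with $\dim\tau$ maximal among all matched pairs; acyclicity together with maximality guarantees that $\tau$ is a cell that is a face of no other cell, and $\sigma$ is a free face of $\tau$ (its only cofaces among the remaining cells is $\tau$), so $(\sigma,\tau)$ constitutes an elementary collapsible pair. Removing $\sigma$ and $\tau$ yields a subcomplex $X'\subseteq X$ onto which $X$ elementarily collapses, hence strong deformation retracts; the restriction of $W$ to $X'$ is again a discrete gradient field (no new directed cycles can appear) whose critical cells are still precisely the cells of $Y$, and $Y\subseteq X'$ since $Y$ consists only of critical cells. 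By induction $X'$ strong deformation retracts onto $Y$ through collapses, and composing gives the result for $X$.

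I do not expect a serious obstacle here; the statement is essentially a packaging of Forman's theory in the form needed later, and the only points requiring care are (i) checking that at each stage the chosen matched pair really is an elementary collapse — this is where acyclicity of the gradient field is used — and (ii) observing that the collapses are performed entirely within $X\setminus Y$, so that $Y$ is untouched and the deformation retraction is strong and stationary on $Y$. The mildest subtlety is bookkeeping: one must make sure the pair of maximal dimension exists and that discarding it preserves the "critical cells form the subcomplex $Y$" hypothesis, both of which are immediate. All of this is standard and can be cited from \cite{MR1612391}; I would present it at the level of the inductive argument above rather than reproving Forman's collapsing theorem from scratch.
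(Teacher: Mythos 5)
Your overall strategy (induct on the matched pairs, realizing each removal as an elementary collapse performed away from $Y$) can be made to work, but the inductive step as you state it is false: choosing an \emph{arbitrary} matched pair $(\sigma,\tau)$ with $\dim\tau$ maximal does not guarantee that $\sigma$ is a free face. Maximality together with the subcomplex hypothesis does show that $\tau$ is a facet and that any coface of $\sigma$ other than $\tau$ must be the top cell of \emph{another} matched pair of the same maximal dimension --- but such cofaces can exist. Concretely, take two triangles $abc$ and $acd$ glued along the edge $ac$, with matching $ac\nearrow abc$ and $cd\nearrow acd$: the field is acyclic, the critical cells (the four vertices and the edges $ab$, $bc$, $ad$) form a subcomplex, both pairs sit in the maximal dimension, yet $ac$ is not a free face of $abc$ because it also bounds $acd$. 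The missing idea is precisely \emph{how} acyclicity is used: among the maximal-dimension pairs one must select $(\sigma,\tau)$ such that $\sigma$ lies in no other remaining top cell. Such a pair exists because the relation ``$\tau'\to\tau$ whenever the partner of $\tau$ is a face of $\tau'$'' defines a finite directed graph on the maximal-dimension pairs whose directed cycles would be closed gradient paths; acyclicity therefore provides a source, and only for that pair is the elementary collapse legitimate. Your first-paragraph appeal to Forman's fundamental theorem has a related gap: that theorem gives a homotopy equivalence with a CW complex having one cell per critical cell, not a collapse of $X$ onto the subcomplex $Y$ sitting inside $X$; identifying the Morse complex with $Y$ is exactly the point at issue, so it cannot simply be cited.

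For comparison, the paper sidesteps the hand-ordering of pairs entirely: it picks a discrete Morse function $\phi$ inducing $W$, adds a large constant $c$ to the values of all non-critical cells (this is where the hypothesis that the critical cells form a subcomplex is used, to verify the shifted function is still a discrete Morse function with the same gradient field), and then invokes Forman's collapsing theorem for level subcomplexes to conclude $X\searrow X(c)=Y$. Either route is fine once completed, but your induction needs the source-selection argument above to close the free-face step.
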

\begin{proof}
Choose a discrete Morse function $\phi:\text{Faces}(X)\rightarrow\mathbb{R}^{+}$ with gradient field $W$, and fix a constant $c\in\mathbb{R}^+$ such that
\begin{equation}\label{morsepunchline}
\phi(\alpha)<c
\end{equation}
for every critical cell $\alpha$. Then the formula
    $$f(\alpha)=\begin{cases}
        \phi(\alpha), &\text{ if } \alpha \text{ is critical;}\\
        \phi(\alpha)+c, &\text{ otherwise,}
    \end{cases}$$
defines a discrete Morse function $f:\text{Faces}(X)\rightarrow\mathbb{R}^+$ with gradient field $W$ as, in fact,
\begin{equation}\label{mismamorse}
        f(\alpha)<f(\beta)  \iff \phi(\alpha)<\phi(\beta)
\end{equation}
holds for any pair of cells $\alpha$ and $\beta$ of $X$ with $\dim(\alpha)+1=\dim(\beta)$ and $\alpha<\beta$. Indeed, if $\alpha$ is non-critical, then so is $\beta$, as critical cells form a subcomplex, and then (\ref{mismamorse}) is certainly satisfied. On the other hand, if $\alpha$ is critical, then the set 
$$\{\gamma \text{ cell of } X\colon \alpha < \gamma, \dim(\gamma)=\dim(\alpha)+1, \text{ and } \phi(\alpha)\geq \phi(\gamma)\}$$ 
is empty by definition. So $\phi(\alpha)<\phi(\beta)$, while $f(\alpha)<f(\beta)$ holds either because $\beta$ is critical or, else, by construction of $f$. Either way, we get (\ref{mismamorse}). 

The proof is complete in view of (\ref{morsepunchline}) and Forman's fundamental theorem in discrete Morse theory \cite[Theorem 3.3]{MR1612391}. Indeed, by construction of $f$, $f(\alpha)<c<f(\beta)$ holds whenever $\alpha$ is a critical and $\beta$ is non-critical. Thus $Y$ is the level subcomplex $X(c)$, while any cell with $f$-hight larger than $c$ is either redundant or collapsible, which yields $X\searrow X(c)=Y$. 
\end{proof}

In view of Lemma \ref{MorseLemma}, Theorem \ref{StabThm} will be proved once we construct a discrete gradient field on $\DF^k(G',n)$ whose critical cells are those of $Y$. To address such a goal, we start by giving a manageable description of the \emph{external cells}, i.e., cells of $\DF^k(G',n)$ that are not cells of $Y$, as those are the ones that have to be acyclically paired.

\begin{lemma}\label{ExtCell}
    A cell $x'$ of $\DF^k(G',n)$ is external if and only if $\widehat{x'}$ fails to be a cell of $\DF^k(G,n)$.
\end{lemma}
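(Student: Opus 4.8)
The plan is to establish both implications by carefully relating the cell $x'$ of $\DF^k(G',n)$ to its inflation $\widehat{x'}$ and to the deflation sets $\widecheck x$ that assemble $Y$. Recall from Section~3 that the cells of $Y$ are exactly the members of the sets $\widecheck x$ as $x$ ranges over cells of $\DF^k(G,n)$, and that on the ``far from $w$'' regions $N_a$ and $N_w$ the inflation and deflation processes are mutually inverse bijections (Example~\ref{infdefexample}).

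For the backward implication, I would argue contrapositively: suppose $x'$ is \emph{not} external, i.e.\ $x'\in Y$. Then by construction $x'\in\widecheck x$ for some cell $x$ of $\DF^k(G,n)$. The point is that inflating any member of $\widecheck x$ recovers $x$ itself: a coordinate $x_t=a$ gets deflated to one of $v_1,w,v_{i+1},e_1,e_{i+1}$, and inflation sends $w,e_1,e_{i+1}\mapsto a$ and fixes $v_1,v_{i+1}$ — wait, this is the subtlety, since $v_1$ and $v_{i+1}$ are \emph{not} sent back to $a$ by the inflation rule of Definition~\ref{deflainfla}(b). So I must be careful: the claim ``$\widehat{x'}=x$'' is false in general, and instead $\widehat{x'}$ is obtained from $x$ by possibly replacing some of the $a$-coordinates by the endpoint vertices $v_1$ or $v_{i+1}$. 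The correct statement is that $\widehat{x'}$ is a \emph{face} of $x$ (one collapses the edge $a$ to one of its endpoints in some coordinates), hence $\eta(v,\overline{\widehat{x'}})\le\eta(v,\overline{x})\le k-1$ for all $v\in V$ by the face-monotonicity recorded after \eqref{pairwisedisj} together with the no-collision criterion \eqref{nocolisions} applied to $x$. Thus $\widehat{x'}$ is a cell of $\DF^k(G,n)$, proving the contrapositive of the ``if'' direction.

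For the forward implication, assume $\widehat{x'}$ \emph{is} a cell of $\DF^k(G,n)$; I must produce a cell $x$ of $\DF^k(G,n)$ with $x'\in\widecheck x$, which shows $x'\in Y$. The natural candidate is $x:=\widehat{x'}$ itself. One checks that $x'$ lies in the deflation set $\widecheck{\widehat{x'}}$: for each coordinate, if $\widehat{x'}_t=a$ then $x'_t\in\{w,e_1,e_{i+1}\}$ by the definition of inflation, and if $\widehat{x'}_t\ne a$ then $x'_t=\widehat{x'}_t$; in the first case $x'_t$ is indeed one of the allowed deflation values of $a$, and in the second case $x'_t=\widehat{x'}_t=x_t$ — except I must confirm that a coordinate with $x'_t\in\{v_1,v_{i+1}\}$ does not cause trouble, since such a coordinate has $\widehat{x'}_t=x'_t\ne a$, so $x_t=v_1$ or $v_{i+1}$ and then $x'_t$ is \emph{not} among the deflation replacements of a non-$a$ coordinate; but this is fine because deflation leaves non-$a$ coordinates untouched, so $x'_t=x_t$ as required. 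Hence every coordinate of $x'$ is a legal deflation of the corresponding coordinate of $x=\widehat{x'}$, giving $x'\in\widecheck x$ and therefore $x'\in Y$, i.e.\ $x'$ is not external.

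The main obstacle, and the step deserving the most care, is the bookkeeping in the first paragraph above: the inflation map does \emph{not} invert deflation on the nose — it forgets which endpoint ($v_1$ or $v_{i+1}$) a given deflated coordinate came from — so one cannot simply say $\widehat{x'}=x$. The fix is to observe that $\widehat{x'}$ is always a face of (equivalently, a cell in the subdivision of) some $x\in\widecheck{}^{-1}$, and to push the no-collision inequality \eqref{nocolisions} through the face relation rather than through an equality. Once that is set up cleanly, both directions reduce to unwinding Definition~\ref{deflainfla} coordinate by coordinate.
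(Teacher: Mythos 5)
Your proof is correct and takes essentially the same route as the paper: the forward direction by exhibiting $x'$ as a member of $\widecheck{\widehat{x'}}$, and the converse by noting that $\widehat{x'}$ is a (not necessarily immediate) face of the cell $x$ with $x'\in\widecheck{x}$, hence lies in $\DF^k(G,n)$. The extra bookkeeping you add (face-monotonicity of $\eta(v,\overline{\,\cdot\,})$ combined with (\ref{nocolisions})) merely spells out the paper's one-line justification, and your observation that inflation does not invert deflation on the nose is exactly the point the paper handles by passing to faces.
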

\begin{proof}
    If $\widehat{x'}$ is a cell of $\DF^k(G,n)$, then $x'$ cannot be external, as $x'$ is an element of the deflation of $\widehat{x'}$. Conversely, if $x'$ is not an external cell, say $x'\in\widecheck{x}$ for a cell $x$ of $\DF^k(G,n)$, then $\widehat{x'}$ must be a cell of $\DF^k(G,n)$, since, in fact, $\widehat{x'}$ is (perhaps not an immediate) face of\tilde$x$.
\end{proof}

The external-cell condition on $x'$ given in Lemma \ref{ExtCell} amounts to requiring that $\widehat{x'}$ has at least $k$ collisions. But such collisions are bound to hold at vertices $v_1$ or $v_{i+1}$. Thus $x'$ is external if and only if 
\begin{equation}\label{caracterizacion1}
\eta(v_1,\overline{\widehat{x'}})\geq k \text{\;\;\;\;or\;\;\;\;} \eta(v_{i+1},\overline{\widehat{x'}})\geq k.
\end{equation}
For our purposes, it is convenient to spell out conditions (\ref{caracterizacion1}) in terms of $x'$, rather than in terms of the inflated cell $\widehat{x'}$.

\begin{lemma}\label{spelled}
Conditions in (\ref{caracterizacion1}) can be spelled out, respectively, as
\begin{equation}\label{simplificadas}
\eta(w,\overline{x'})+\eta(v_1,\overline{x'})-\eta(e_1,x')\geq k 
\text{\:\:\: and \:\:\:}\eta(w,\overline{x'})+\eta(v_{i+1},\overline{x'})-\eta(e_{i+1},x')\geq k.
\end{equation}
\end{lemma}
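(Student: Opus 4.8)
The plan is to evaluate $\eta(v_1,\overline{\widehat{x'}})$ and $\eta(v_{i+1},\overline{\widehat{x'}})$ directly in terms of the invariants of $x'$, and then substitute into (\ref{caracterizacion1}). Recall that inflation changes only those coordinates $t$ with $x'_t\in\{w,e_1,e_{i+1}\}$, turning each of them into the edge $a$; since $\overline a$ consists of $v_1$ and $v_{i+1}$ (the subdivision hypothesis on $G$ prevents $a$ from being a loop, but in any case $v_1,v_{i+1}\in\overline a$), every such coordinate $t$ satisfies both $v_1\in\overline{\widehat{x'}_t}$ and $v_{i+1}\in\overline{\widehat{x'}_t}$. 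I would therefore split $\{1,\dots,n\}$ as the disjoint union of $A:=w(x')\cup e_1(x')\cup e_{i+1}(x')$ and its complement, and count in each part the coordinates contributing to $\eta(v_1,\overline{\widehat{x'}})$.

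All coordinates in $A$ contribute, so their total contribution is $|A|=\eta(w,x')+\eta(e_1,x')+\eta(e_{i+1},x')=\eta(w,\overline{x'})$, the last equality being the identity recorded after (\ref{nocolisions}) (valid because $e_1,e_{i+1}$ are the only edges of $G'$ incident to $w$). On the complement of $A$ one has $\widehat{x'}_t=x'_t$, a cell common to $G$ and $G'$, and $v_1\in\overline{\widehat{x'}_t}$ holds precisely when $x'_t$ is the vertex $v_1$ or an edge of $G'$ at $v_1$ different from $e_1$ --- here one uses that the combinatorics of $G$ near $v_1$ agree with those of $G'$ after the single replacement of $a$ by the pair $e_1,e_{i+1}$, so that the edges of $G'$ at $v_1$ are exactly those of $G$ at $v_1$ with $a$ traded for $e_1$. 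Hence, by the $G'$-analogue of the identity $\eta(v,\overline x)=\eta(v,x)+\sum_{v\in\overline e}\eta(e,x)$ from (\ref{varios1}), this part contributes $\eta(v_1,x')+\sum_{v_1\in\overline e,\,e\ne e_1}\eta(e,x')=\eta(v_1,\overline{x'})-\eta(e_1,x')$.

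Adding the two contributions gives $\eta(v_1,\overline{\widehat{x'}})=\eta(w,\overline{x'})+\eta(v_1,\overline{x'})-\eta(e_1,x')$, and the verbatim symmetric computation at $v_{i+1}$ gives $\eta(v_{i+1},\overline{\widehat{x'}})=\eta(w,\overline{x'})+\eta(v_{i+1},\overline{x'})-\eta(e_{i+1},x')$; plugging these into (\ref{caracterizacion1}) yields exactly (\ref{simplificadas}). I expect the only mildly delicate point to be the bookkeeping in the previous paragraph: one must check that after inflation the edges incident to $v_1$ (resp.~$v_{i+1}$) in $G$ correspond to those incident to it in $G'$ via the single swap $a\leftrightarrow e_1$ (resp.~$a\leftrightarrow e_{i+1}$), and that among the cells $w,e_1,e_{i+1}$ only $e_1$ (resp.~$e_{i+1}$) has $v_1$ (resp.~$v_{i+1}$) in its closure in $G'$; granting this, the statement is the formal substitution carried out above.
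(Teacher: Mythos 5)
Your proof is correct and follows essentially the same route as the paper: both establish the identity $\eta(v_s,\overline{\widehat{x'}})=\eta(w,\overline{x'})+\eta(v_s,\overline{x'})-\eta(e_s,x')$ for $s\in\{1,i+1\}$ by counting coordinates, and then substitute into (\ref{caracterizacion1}). The only difference is organizational: you partition $\{1,\dots,n\}$ into the coordinates altered by inflation and their complement, whereas the paper phrases the same count as an inclusion--exclusion (``closure touches $v_s$ or $w$, with $x'_t=e_s$ counted twice''), so the two arguments coincide in substance.
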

\begin{proof}
Let $x'=(x'_1,\ldots,x_n')$ and $\widehat{x'}=(\widehat{x'}_1,\ldots,\widehat{x'}_n)$. For $s\in\{1,i+1\}$, the closure of a coordinate $\widehat{x'}_t$  touches\tilde$v_s$ if and only if the closure of $x'_t$ touches either $v_s$ or $w$, though cases with $x'_t=e_s$ are accounted for twice. Therefore $\eta(v_s,\overline{\widehat{x'}})=\eta(w,\overline{x'})+\eta(v_s,\overline{x'})-\eta(e_s,x')$.
\end{proof}

\section{The rank function}
\begin{definition}\begin{itemize}\item[(a)]
For $s\in\{1,2,\ldots,l\}$, a cell $x'=(x'_1,x'_2,\ldots,x'_n)\in\DF^k(G',n)$ is said to have property $R(s)$ when $\eta(v_s, \overline{x'})=\eta(e_s, x')$, i.e., provided any coordinate $x'_t$ whose closure touches $v_s$ must in fact be $x'_t=e_s$.
\item[(b)] The rank of an external cell $x'=(x'_1,\ldots,x'_n)$, denoted by $rank(x')$, is the minimal $s\in\{1,2,\ldots,l\}$ such that $x'$ has property $R(s)$. Thus, $rank(x')=j$ if and only if $\eta(v_j, \overline{x'})=\eta(e_j, x')$ but $\eta(v_s, \overline{x'})>\eta(e_s, x')$ for $1\leq s<j$.

\end{itemize}
 \end{definition}

The subdivision hypothesis for $G$ in Theorem \ref{StabThm} (in force from now on) ensures that the rank function is well defined. 
\begin{proposition} \label{RankWD}
Assume $G$ is $(k,n)$-sufficiently subdivided, and let 
$$\ell:=\begin{cases}\ \ l,& \text{if \ $H=P$;}\\
l-1,& \text{if \ $H=C$.}\end{cases}$$
Then, for each external cell $x'=(x'_1,\ldots,x'_n)$, the set $\{1\leq s\leq \ell\colon x' \text{ has property } R(s)\}$ is non-empty. In particular $rank(x')$ is well-defined and takes values in $\{1,2,\ldots,\ell\}$.
\end{proposition}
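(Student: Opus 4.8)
The plan is to argue by contradiction: suppose $x'$ is an external cell for which $R(s)$ fails for every $s\in\{1,2,\ldots,\ell\}$, i.e.\ $\eta(v_s,\overline{x'})>\eta(e_s,x')$ for all such $s$, and derive a violation of the subdivision hypothesis $(S.1)$ or $(S.2)$ on $G$. The key observation is that $\eta(v_s,\overline{x'})>\eta(e_s,x')$ means some coordinate of $x'$ whose closure touches $v_s$ is \emph{not} the edge $e_s$; decomposing $\eta(v_s,\overline{x'})$ via the disjointness relations in \eqref{varios1} (and their primed analogues recorded at the end of Section~2), this coordinate must be either the vertex $v_s$ itself, the edge $e_{s+1}$ (for $s\neq i,l$), or ---when $s\in\{i,l\}$--- an edge counted by $\mathit{ext}_s(x')$. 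In every case each such $s$ ``costs'' at least one coordinate that is occupied by a point lying on the strictly-interior part of the path/cycle $H'$ between $v_1$ (resp.\ $w$) and $v_s$, or on an external edge at $v_i$ or $v_l$.

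The main step is a careful counting argument. First I would use the external-cell characterization \eqref{simplificadas}: at least $k$ coordinates of $x'$ have closures touching $w$ or $v_1$ (minus the $e_1$-overcount), or the analogous statement at $v_{i+1}$; by the labelling of $H'$ these $k$ coordinates are ``concentrated near $w$.'' Then, since $R(s)$ fails for $s=1,2,\ldots,\ell$, one picks up, for each such $s$, a distinct coordinate witnessing a point at or near $v_s$ (distinctness coming from the pairwise-disjointness of the sets $v_s(x')$, $e_s(x')$, and $\mathit{ext}_s(x')$, which partition $\{1,\ldots,n\}$ appropriately). Adding up: the $k$ coordinates forced by externality together with $\ell$ further coordinates (one per failed $R(s)$) must all fit among the $n$ coordinates, but some of them are shared only in controlled ways, and the arithmetic $k + (\text{number of interior vertices of } H) \le n$ will be contradicted precisely when $H=P$ has fewer than $n-k+2$ vertices, or $H=C$ has fewer than $n-k+3$ vertices. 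The two cases $H=P$ and $H=C$ are handled in parallel, the cycle case costing one extra vertex because of the identification \eqref{cerrado} $v_i=v_l$, which is exactly the source of the $n-k+3$ versus $n-k+2$ discrepancy between $(S.1)$ and $(S.2)$.

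Once the contradiction is reached, the set $\{1\le s\le\ell : x' \text{ has } R(s)\}$ is non-empty, so $rank(x')$ is a well-defined element of $\{1,2,\ldots,\ell\}$, which is the assertion. The hard part will be bookkeeping the overlaps: a single coordinate $t$ with $x'_t=e_s$ contributes to $\eta(v_{s-1},\overline{x'})$ and $\eta(v_s,\overline{x'})$ simultaneously, and a coordinate equal to the vertex $v_s$ is ``new'' only at that one $s$, so one must pin down exactly which coordinates are double-counted across consecutive values of $s$ and verify that the net count of \emph{distinct} coordinates devoted to the interior of $H$ is at least $\ell-1$ (the number of interior vertices $v_1,\ldots,v_{i-1},v_{i+1},\ldots$ strictly between the primitive endpoints). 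I expect the cleanest route is to sum the inequalities $\eta(v_s,\overline{x'})-\eta(e_s,x')\ge 1$ over $s=1,\ldots,\ell$ after substituting the closure formulas, causing the edge terms to telescope, leaving a lower bound on $\sum_s \eta(v_s,x') + \eta(\mathit{ext},x')$ that, combined with \eqref{simplificadas} and the global constraint \eqref{varios1}, exceeds $n$ unless $H$ has the number of vertices guaranteed by Definition~\ref{sufficietlysubdivided}.
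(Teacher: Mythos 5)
Your overall strategy is the same as the paper's: assume $\eta(v_s,\overline{x'})-\eta(e_s,x')\geq 1$ for every $s\leq\ell$, count coordinates against $n$ via the closure formulas, bring in the externality characterization (\ref{simplificadas}), and contradict the subdivision conditions. But the argument as planned fails exactly at the bookkeeping you defer. First, you cannot simply add ``$k$ coordinates forced by externality'' to the $\ell$ coordinates coming from the failed $R(s)$: the quantity $\eta(v_1,\overline{x'})-\eta(e_1,x')$ (or its analogue at $v_{i+1}$) occurring in (\ref{simplificadas}) is itself one of the summands you are already counting, so the two contributions overlap by an uncontrolled amount. The correct move is to use (\ref{simplificadas}) to strengthen that single summand, together with $\eta(w,\overline{x'})$, to the lower bound $k$, rather than to add an extra $k$. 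Second, your target arithmetic is too weak even in the clean case: $k+(\text{number of interior vertices of }P)\leq n$ reads $k+l-2\leq n$, which is perfectly consistent with condition (S.1) (which says $n\leq k+l-2$); the contradiction requires the sharper bound $n\geq k+l-1$, and to reach it one must also count the external-edge contributions $\eta(ext_i,x')$ and $\eta(ext_l,x')$ at both endpoints of $P$.

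This leads to the most serious gap: those endpoint contributions need not be disjoint, because sufficient subdivision does not forbid an edge of $G$ joining $v_i$ to $v_l$ directly, and a coordinate occupying such an edge lies in both $ext_i(x')$ and $ext_l(x')$. In that situation the count only gives $n\geq k+l-2$, which does \emph{not} contradict (S.1); the paper resolves this by noting that $P$ together with such an edge is an essential cycle, so (S.2) forces $l\geq n-k+3$ and the weaker bound still yields a contradiction --- i.e.\ both (S.1) and (S.2) are needed even when $H=P$, contrary to your expectation that the path case is settled by (S.1) alone. Relatedly, when the vertex at which (\ref{simplificadas}) holds is an endpoint of $H$ (i.e.\ $i=1$, or $i=l-1$ in the cycle case), the summand to be strengthened involves $ext$-terms rather than an interior difference, which forces a choice of which endpoint's external edges to include (the paper's selection of $(s_1,s_2)$, and the separate subcases with the correction term $M(x')$ when $H=C$). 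None of this is handled by the telescoping you propose, so as it stands the sketch does not reach the bounds $n\geq k+l-1$, $n\geq k+l-2$, or $n\geq k+l-3$ on which the contradiction rests.
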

\begin{proof}
Assume for a contradiction that 
\begin{equation} \label{eq:diff}
\mbox{$\eta(v_s,\overline{x'})-\eta(e_s,x')\geq 1$, 
for $1\leq s\leq \ell$,}
\end{equation}
and let $S$ be the cardinality of the set $$\{t\in\{1,2,\ldots,n\}\colon x'_t\in\{w,v_1,e_1,v_2,e_2,\ldots, v_l,e_l\}\},$$ so that $S\leq n$. 

\smallskip\noindent\textbf{Case $H=P$ (so $\ell=l$):} We have
\begin{align}
S&=\eta(w, x')\:+\sum_{1\leq s\leq i-1}\left(\rule{0mm}{4mm}\eta(v_s, x')+\eta(e_s, x')\right) +\eta(v_{i}, x')+\eta(e_{i}, x')\nonumber\\
&\hspace{2cm} +\sum_{i+1\leq s\leq l-1}\left(\rule{0mm}{4mm}\eta(v_s, x')+\eta(e_s, x')\right) +\eta(v_{l}, x')+\eta(e_{l}, x')\nonumber\\
&=\eta(w, x')\hspace{.5mm}+\hspace{.5mm}\eta(e_1, x')+\eta(e_{i+1}, x')\nonumber\\
&\hspace{2cm} +\sum_{1\leq s\leq i-1}\left(\rule{0mm}{4mm}\eta(v_s, x')+\eta(e_{s+1}, x')\right) +\eta(v_{i}, x')\nonumber\\
&\hspace{2cm} +\sum_{i+1\leq s\leq l-1}\left(\rule{0mm}{4mm}\eta(v_s, x')+\eta(e_{s+1}, x')\right) +\eta(v_{l}, x')\nonumber\\
&=\eta(w, \overline{x'})+\sum_{\substack{1\leq s\leq l-1\\
s\neq i}}\left(\rule{0mm}{4mm}\eta(v_s, \overline{x'})-\eta(e_s, x')\right)+\eta(v_i, x')+\eta(v_l, x'),\label{dereferencia}
\end{align}
where the last equality follows from the fact that the entries whose closure touches $v_s$ are precisely $v_s$, $e_s$ and $e_{s+1}$ when $s\not\in\{i,l\}$.
    
If there were no edges between $v_i$ and $v_l$, so that $ext_i(x')\cap ext_l(x')=\varnothing$, then (\ref{eq:diff}), (\ref{dereferencia}) and the fact that the external cell $x'$ satisfies one of the inequalities in (\ref{simplificadas}) would yield
$$
n\geq S+\eta(ext_{i}, x')+\eta(ext_{l}, x')=\eta(w, \overline{x'})+\sum_{1\leq s\leq l}\left(\rule{0mm}{4mm}\eta(v_s, \overline{x'})-\eta(e_s, x')\right)\geq k+l-1,
$$
contradicting item ($S$.1) in Definition \ref{sufficietlysubdivided}. Therefore there must exist an edge joining $v_i$ and $v_l$, so that
\begin{equation}\label{cntdn3}
l\geq n-k+3,
\end{equation}
in view of item ($S$.2) in Definition \ref{sufficietlysubdivided}. On the other hand, for $\{s_1,s_2\}=\{i,l\}$, (\ref{dereferencia}) yields   
    \begin{equation}\label{cntdn2}
        n\geq S+\eta(ext_{s_1}, x')\geq \eta(w, \overline{x'})+\sum_{\substack{1\leq s\leq l\\
        s\neq s_2}}\left(\rule{0mm}{4mm}\eta(v_s, \overline{x'})-\eta(e_s, x')\right).
    \end{equation}
Choosing in fact
$$(s_1,s_2)=\begin{cases}
(i,l), & \text{provided $x'$ satisfies the first inequality in (\ref{simplificadas}) and $i=1$;} \\
(l,i), &\text{otherwise,}
\end{cases}$$
we get from (\ref{eq:diff}) that the right hand-side in (\ref{cntdn2}) ---and therefore $n$--- is bounded from below by $k+l-2$, which is inconsistent with (\ref{cntdn3}).

\smallskip\noindent\textbf{Case $H=C$ (so $\ell=l-1$):} 
Since $v_i=v_l$, we now have
    \begin{align}
        S%&=\eta(w, x')+\sum_{1\leq s\leq i-1}\left(\rule{0mm}{4mm}\eta(v_s, x')+\eta(e_s, x')\right) +\eta(v_{i}, x')+\eta(e_{i}, x')\nonumber\\&\hspace{1.9cm}+\sum_{i+1\leq s\leq l-1}\left(\rule{0mm}{4mm}\eta(v_s, x')+\eta(e_s, x')\right)+\eta(e_{l}, x')\nonumber\\
        &=\left(\rule{0mm}{4mm}\eta(w, x')+\eta(e_1, x')+\eta(e_{i+1}, x')\right)+\eta(v_{i}, x')\nonumber\\
        &\hspace{.5cm}+\sum_{1\leq s\leq i-1}\left(\rule{0mm}{4mm}\eta(v_s, x')+\eta(e_{s+1}, x')\right) +\sum_{i+1\leq s\leq l-1}\left(\rule{0mm}{4mm}\eta(v_s, x')+\eta(e_{s+1}, x')\right)\nonumber\\
        &=\eta(w, \overline{x'})+\eta(v_i, x')+\sum_{\substack{1\leq s\leq l-2\\
        s\neq i}}\left(\rule{0mm}{4mm}\eta(v_s, \overline{x'})-\eta(e_s, x')\right)+M(x'), \label{primitivo}
    \end{align}
where
$$
M(x'):=\begin{cases}
\eta(v_{l-1}, x')+\eta(e_{l}, x'), & \text{ if $i\leq l-2,$}\\ 0, & \text{ otherwise.}\end{cases}
$$
Using $n\geq S+\eta(ext_{i}, x')$, we then get
\begin{equation}\label{laestimacioncompleta}
n\geq\eta(w, \overline{x'})+\eta(v_i, x')+\sum_{\substack{1\leq s\leq l-2\\ s\neq i}}\left(\rule{0mm}{4mm}\eta(v_s, \overline{x'})-\eta(e_s, x')\right)+M(x')+\eta(ext_{i}, x').
%        &\geq \eta(w, \overline{x'})+\sum_{\substack{1\leq s\leq l-2\\ s\neq i}}\left(\rule{0mm}{4mm}\eta(v_s, \overline{x'})-\eta(e_s, x')\right) + \left(\rule{0mm}{4mm}\eta(v_i,\overline{x'})-\eta(e_i,x')\right).
\end{equation}
%Note that the last expression is actually larger than $$\eta(w, \overline{x'})+\sum_{1\leq s\leq l-2}(\eta(v_s, \overline{x'})-\eta(e_s, x'))$$ when $i=l-1$, in view of (\ref{eq:diff}). 
The estimations in (\ref{primitivo}) and (\ref{laestimacioncompleta}) are used next to argue that
\begin{equation}\label{cntdtn5}
n\geq k+l-3,
\end{equation}
which yields the desired contradiction (to item (S.2) in Definition \ref{sufficietlysubdivided}), as the cycle $C$ in $G$ has $l-1$ vertices.

\smallskip
Verifying (\ref{cntdtn5}) is easy when $i=1$ and the first inequality in (\ref{simplificadas}) holds, as (\ref{laestimacioncompleta}) yields
    \begin{align*}
        n&\geq \eta(w, \overline{x'})\;+\sum_{2\leq s\leq l-2}\left(\rule{0mm}{4mm}\eta(v_s, \overline{x'})-\eta(e_s, x')\right) + \left(\rule{0mm}{4mm}\eta(v_1,\overline{x'})-\eta(e_1,x')\right)\geq k + l-3,
    \end{align*}
in view of (\ref{eq:diff}). Likewise, when $i=l-1$ (so $v_{l-1}=v_l$) and the second inequality in (\ref{simplificadas}) holds, (\ref{laestimacioncompleta}) becomes
    \begin{align*}
        n&\geq\eta(w, \overline{x'})+\sum_{1\leq s\leq l-2}\left(\rule{0mm}{4mm}\eta(v_s, \overline{x'})-\eta(e_s, x')\right) + \left(\rule{0mm}{4mm}\eta(v_l,\overline{x'})-\eta(e_{l-1},x')-\eta(e_l,x')\right)\\
        &=\eta(w, \overline{x'})+\sum_{1\leq s\leq l-3}\left(\rule{0mm}{4mm}\eta(v_s, \overline{x'})-\eta(e_s, x')\right) + \left(\rule{0mm}{4mm}\eta(v_{l-2}, \overline{x'})-\eta(e_{l-2}, x')\right)\\
        &\hspace{7.45cm}+\left(\rule{0mm}{4mm}\eta(v_{l},\overline{x'})-\eta(e_{l-1},x')-\eta(e_l,x')\right)\\
        &=\eta(w, \overline{x'})+\sum_{1\leq s\leq l-3}\left(\rule{0mm}{4mm}\eta(v_s, \overline{x'})-\eta(e_s, x')\right) + \left(\rule{0mm}{4mm}\eta(v_{l-2}, x')+\eta(e_{l-1}, x')\right)\\
        &\hspace{7.45cm}+\left(\rule{0mm}{4mm}\eta(v_{l},\overline{x'})-\eta(e_{l-1},x')-\eta(e_l,x')\right)\\
        &\geq \eta(w, \overline{x'})+\sum_{1\leq s\leq l-3}\left(\rule{0mm}{4mm}\eta(v_s, \overline{x'})-\eta(e_s, x')\right) + \left(\rule{0mm}{4mm}\eta(v_{l}, \overline{x'})-\eta(e_{l}, x')\right)\\
        &\geq k + l-3,
    \end{align*}
again in view of (\ref{eq:diff}), thus giving (\ref{cntdtn5}).

\begin{remark}\label{valoresmeores}{\em
In order to get (\ref{cntdtn5}) in the two cases just analyzed, it suffices to assume\tilde(\ref{eq:diff}) for $1\leq s \leq l-2$ ---rather than $1\leq s \leq l-1$.
}\end{remark}

Back to the proof, since one of the inequalities in (\ref{simplificadas}) is satisfied, the only two cases remaining to be analyzed are \emph{(i)} when $x'$ satisfies the first inequality in (\ref{simplificadas}) with $i>1$, and \emph{(ii)} when $x'$ satisfies the second inequality in (\ref{simplificadas}) with $i<l-1$. But in these two situations, (\ref{primitivo}) gives
\begin{equation}\label{thetwosituations}
n\geq S\geq\eta(w, \overline{x'})\,+\sum_{\substack{1\leq s\leq l-1\\ s\neq i}}\left(\rule{0mm}{4mm}\eta(v_s, \overline{x'})-\eta(e_s, x')\right)\geq k +l-3,
\end{equation}
where we can use the required difference in the summation in order to apply (\ref{simplificadas}), while the remaining differences are summed up to apply (\ref{eq:diff}).
\end{proof}

We will need to know that, under some circumstances, the rank function takes values slightly smaller than what is asserted in the statement of Proposition \ref{RankWD}:

\begin{proposition} \label{RankRestC}
If $H=C$ and $i=l-1$, then $rank(x')\leq l-2$ for any external cell $x'$ of $\DF^k(G',n)$.
\end{proposition}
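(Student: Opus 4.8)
The statement to prove is: for $H=C$ with $i=l-1$, every external cell $x'$ satisfies $rank(x')\leq l-2$. In light of the definition of rank, this amounts to showing that $x'$ has property $R(s)$ for some $s$ in $\{1,2,\ldots,l-2\}$, i.e.\ that the set considered in Proposition \ref{RankWD} actually lies in $\{1,\ldots,l-2\}$ under these additional hypotheses. The plan is to mimic, but sharpen, the contradiction argument in the proof of Proposition \ref{RankWD}: assume for a contradiction that $\eta(v_s,\overline{x'})-\eta(e_s,x')\geq 1$ for \emph{all} $s$ with $1\leq s\leq l-2$, and derive an inequality on $n$ that is inconsistent with the subdivision hypothesis $(S.2)$ applied to the cycle $C$ (which has $l-1$ vertices, hence forces $n-k+3\leq l-1$, i.e.\ $n\leq k+l-4$).

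First I would recall that, since $H=C$ and $i=l-1$, we have $v_{l-1}=v_i=v_l=:v_0$, so in the estimate (\ref{primitivo}) the term $M(x')$ vanishes (we are in the ``otherwise'' branch, as $i=l-1$ is not $\leq l-2$). Thus (\ref{primitivo}) reads
$$
S=\eta(w,\overline{x'})+\eta(v_i,x')+\sum_{\substack{1\leq s\leq l-2\\ s\neq i}}\left(\eta(v_s,\overline{x'})-\eta(e_s,x')\right),
$$
and since $i=l-1$ the constraint ``$s\neq i$'' is automatically satisfied for $1\leq s\leq l-2$, so the sum runs over all $s$ from $1$ to $l-2$. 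Next I would invoke $n\geq S+\eta(ext_i,x')$ together with the fact that $x'$ is external, hence satisfies one of the two inequalities in (\ref{simplificadas}). This is exactly the situation already treated in the displayed computation in the proof of Proposition \ref{RankWD} for the subcase ``$i=l-1$ and the second inequality in (\ref{simplificadas}) holds'', which produced $n\geq k+l-3$; and as Remark \ref{valoresmeores} explicitly notes, in that derivation one only needs (\ref{eq:diff}) for $1\leq s\leq l-2$. So the bulk of the work is already done for that subcase. The remaining subcase is when $x'$ satisfies the \emph{first} inequality in (\ref{simplificadas}); here, because $i=l-1>1$ (assuming $l\geq3$; the degenerate small-$l$ cases can be checked by hand), we are in situation \emph{(i)} of the end of the proof of Proposition \ref{RankWD}, and (\ref{thetwosituations}) with the ``$s\neq i$'' sum running over $1\leq s\leq l-1$, $s\neq l-1$, i.e.\ over $1\leq s\leq l-2$, gives $n\geq k+l-3$ using only (\ref{eq:diff}) for $1\leq s\leq l-2$.

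In either subcase we arrive at $n\geq k+l-3$, i.e.\ $l\leq n-k+3$. But the cycle $C$ in $G$ has $l-1$ vertices, and $(S.2)$ requires that any essential cycle touch at least $n-k+3$ vertices, so $l-1\geq n-k+3$, i.e.\ $l\geq n-k+4$. These two bounds are incompatible, giving the desired contradiction; hence the assumption (\ref{eq:diff}) for all $1\leq s\leq l-2$ fails, so $x'$ has property $R(s)$ for some $s\leq l-2$ and $rank(x')\leq l-2$. The main thing to be careful about—the ``hard part''—is the bookkeeping of which range of $s$ one needs (\ref{eq:diff}) on and making sure the $ext_i$ term and the external-cell inequality are spent on the correct index so that no summand is double-counted; but Remark \ref{valoresmeores} has effectively isolated precisely the fact needed, so beyond invoking it carefully and disposing of the small-$l$ edge cases, the argument is essentially a repackaging of computations already in place.
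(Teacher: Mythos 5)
Your proposal is correct and takes essentially the same route as the paper's own proof: assuming $rank(x')=l-1=i$, you only have the inequalities (\ref{eq:diff}) for $1\leq s\leq l-2$, dispose of the case of the second inequality in (\ref{simplificadas}) via Remark \ref{valoresmeores}, and observe that the summation in (\ref{thetwosituations}) already runs over $1\leq s\leq l-2$ to handle the first inequality, arriving at $n\geq k+l-3$ in contradiction with $(S.2)$. Your hedge about ``small-$l$'' cases is harmless and in fact vacuous, since $(S.2)$ already forces $l-1\geq n-k+3$.
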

\begin{proof}
Assume for a contradiction that some external cell $x'$ has $rank(x')=i=l-1$. In particular,
\begin{align} \label{eq:diffC}
\eta(v_s, \overline{x'})-\eta(e_s, x')\geq 1, \quad \text{for }\; 1\leq s\leq i-1=l-2.    
\end{align}
By Remark \ref{valoresmeores}, we can safely assume that $x'$ satisfies the first inequality in\tilde(\ref{simplificadas}), so we are in the situation of case \emph{(i)} at the end of the previous proof. Moreover, since the summation in (\ref{thetwosituations}) runs over $1\leq s\leq l-2$, the last inequality in (\ref{thetwosituations}) can still be deduced from (\ref{eq:diffC}), leading to the required contradiction.
\end{proof}

We close this section with an observation playing a central role in the construction of the gradient field we need.

\begin{lemma}\label{paraelcampo}
Any rank-$j$ external cell $x'$ of $\DF^k(G',n)$ satisfies
$$\eta(\iota(e_j),x')+\eta(e_j,x')\geq 1.$$
\end{lemma}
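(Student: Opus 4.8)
The plan is to argue by contradiction, assuming that a rank-$j$ external cell $x'$ has $\eta(\iota(e_j),x')+\eta(e_j,x')=0$, and to show that this forces $x'$ to have property $R(s)$ for some $s<j$, contradicting the minimality built into the definition of rank. The key point is that $\iota(e_j)$ is precisely $v_{j-1}$ for $j\neq 1$ (and $\iota(e_1)=w$, while $\iota(e_{i+1})=w$), so the hypothesis says that no coordinate of $x'$ equals $\iota(e_j)$ nor $e_j$. I would first dispose of the boundary indices: if $j=1$ the claim is about $\eta(w,x')+\eta(e_1,x')$, and one checks directly that $\eta(w,x')=\eta(e_1,x')=0$ would force $\eta(v_1,\overline{x'})=\eta(e_2,x')$-type identities incompatible with $x'$ being external (via Lemma~\ref{spelled}); similarly for the case $j=i+1$. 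So assume $j\notin\{1,i+1\}$, whence $e_j=v_{j-1}v_j$ is a genuine edge of $H$ and $\iota(e_j)=v_{j-1}$.

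Now the heart of the argument. By definition of rank, $x'$ has property $R(s)$ for no $s<j$, so in particular $\eta(v_{j-1},\overline{x'})>\eta(e_{j-1},x')$ if $j-1\geq 1$; more usefully, $\eta(v_{j-1},\overline{x'})\neq 0$ fails only if that index is outside $\{1,\ldots,\ell\}$. The decomposition recorded after the deflation/inflation discussion gives, for an interior index $s\neq i,l$,
$$\eta(v_s,\overline{x'})=\eta(v_s,x')+\eta(e_s,x')+\eta(e_{s+1},x'),$$
so with $s=j-1$ (assuming $j-1\notin\{i,l\}$, the generic subcase) we get $\eta(v_{j-1},\overline{x'})=\eta(v_{j-1},x')+\eta(e_{j-1},x')+\eta(e_j,x')$. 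The standing hypothesis kills the terms $\eta(v_{j-1},x')=\eta(\iota(e_j),x')=0$ and $\eta(e_j,x')=0$, leaving $\eta(v_{j-1},\overline{x'})=\eta(e_{j-1},x')$. But that is exactly property $R(j-1)$, contradicting $rank(x')=j$. The remaining subcases — where $j-1\in\{i,l\}$, so one must use the $ext$-augmented formulas $\eta(v_i,\overline{x'})=\eta(v_i,x')+\eta(e_i,x')+\eta(ext_i,x')$ (for $H=P$) or the analogous identity at $v_0=v_i=v_l$ (for $H=C$) — are handled the same way: the hypothesis forces $\eta(v_{j-1},x')=0$ and $\eta(e_j,x')=0$, and one checks that the leftover terms again coincide with $\eta(e_{j-1},x')$, or else that $\eta(v_{j-1},\overline{x'})$ collapses to something that yields property $R(s)$ for an even smaller $s$. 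Here I would lean on Proposition~\ref{RankWD} (and Proposition~\ref{RankRestC} when $H=C$, $i=l-1$) to guarantee that $j$ actually lies in the range where $v_{j-1}$ is a legitimate index and $e_{j-1}$ makes sense.

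The step I expect to be the main obstacle is the bookkeeping at the special indices $j\in\{1,i+1\}$ and $j-1\in\{i,l\}$: there $e_j$ or $e_{j-1}$ is one of the ``split'' edges $e_1,e_{i+1}$, the source $\iota(e_j)$ may be $w$ rather than a $v_s$, and the closure-count formula picks up the $ext$ contributions, so one cannot blindly apply the interior identity. I would organize the proof as a short case analysis on whether $j$ and $j-1$ are regular or special, in each case writing out the correct instance of $\eta(v_{j-1},\overline{x'})$ from the list following the deflation/inflation remark, substituting $\eta(\iota(e_j),x')=\eta(e_j,x')=0$, and reading off property $R(j-1)$ (or $R(s)$ for the relevant $s$), contradicting minimality of $j$.
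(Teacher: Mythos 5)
Your interior case $j\notin\{1,i+1\}$ is correct and coincides with the paper's argument: the failure of property $R(j-1)$ gives $\eta(e_{j-1},x')<\eta(v_{j-1},\overline{x'})=\eta(v_{j-1},x')+\eta(e_{j-1},x')+\eta(e_j,x')$, which is the claim. Moreover, the special subcases you worry about at $j-1\in\{i,l\}$ are vacuous: $j\neq i+1$ forces $j-1\neq i$, and $j\leq\ell\leq l$ (Proposition \ref{RankWD}) forces $j-1<l$, so the interior closure formula always applies there and no $ext$-terms ever enter this case.

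The genuine gap is in the boundary cases $j\in\{1,i+1\}$, where $\iota(e_j)=w$. Your governing mechanism ---derive property $R(s)$ for some $s<j$ and contradict minimality of the rank--- cannot even be formulated when $j=1$, and the identity you point to (``$\eta(v_1,\overline{x'})=\eta(e_2,x')$-type'') is not by itself incompatible with externality; your sketch never explains where a contradiction with (\ref{simplificadas}) would come from. What is actually needed, and what the paper does, uses two inputs absent from your proposal: first, property $R(j)$ itself (which holds because $rank(x')=j$) gives $\eta(v_j,\overline{x'})=\eta(e_j,x')$, so the $j$-th quantity in (\ref{simplificadas}) equals $\eta(w,\overline{x'})\leq k-1$, the bound coming from $x'$ being a cell of $\DF^k(G',n)$; second, the contradiction hypothesis $\eta(w,x')=\eta(e_j,x')=0$ gives $\eta(w,\overline{x'})=\eta(e_{j'},x')$ for $\{j'\}=\{1,i+1\}\setminus\{j\}$, so the $j'$-th quantity equals $\eta(v_{j'},\overline{x'})\leq k-1$, again by the no-$k$-collision condition. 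Only with both bounds in hand does one contradict the externality of $x'$ as expressed in Lemma \ref{spelled}. As written, your ``one checks directly'' hides exactly this step, so the cases $j\in\{1,i+1\}$ remain unproved.
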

\begin{proof}
Assume $j\not\in\{1, i+1\}$, so that $\iota(e_j)=v_{j-1}$. The fact that $rank(x')>j-1$ gives the inequality in
$\eta(e_{j-1},x')<\eta(v_{j-1},\overline{x'})=\eta(v_{j-1},x')+\eta(e_{j-1},x')+\eta(e_{j},x')$, which in turn yields the conclusion.

On the other hand, assume $j\in\{1, i+1\}$, so that $\iota(e_j)=w$, and let $j'$ be defined by $\{j'\}=\{1, i+1\}\setminus\{j\}$. The rank hypothesis gives $\eta(v_j,\overline{x'})-\eta(e_j,x')=0$, so that
\begin{equation}\label{contra1}
\eta(w,\overline{x'})+\eta(v_j,\overline{x'})-\eta(e_j,x')=\eta(w,\overline{x'})\leq k-1,
\end{equation}
as $x'$ is a cell of $\DF^k(G',n)$. Assume in addition (to draw a contradiction) that $\eta(w,x')=\eta(e_j,x')=0$, so that $\eta(w,\overline{x'})=\eta(e_{j'},x')$. Then 
\begin{equation}\label{contra2}
\eta(w,\overline{x'})+\eta(v_{j'},\overline{x'})-\eta(e_{j'},x')=\eta(v_{j'},\overline{x'})\leq k-1,
\end{equation}
again because $x'$ is a cell of $\DF^k(G',n)$. But (\ref{contra1}) and (\ref{contra2}) are incompatible with (\ref{simplificadas}), as $x'$ is external.
\end{proof}

\section{Construction of the pairing}
Throughout this section we assume that $G$ is $(k,n)$-sufficiently subdivided, so that the rank function is well-defined. The gradient field we need is constructed as a perfect matching of external cells. Actually, Proposition \ref{CorFaceOp} below allows us to construct a family $\{W_j\}_j$ of matchings, with $W_j$ perfectly pairing all external cells of rank $j$.

\begin{remark}\label{aclaracion}{\em
In what follows we deal with cells $x'=(x'_1,\ldots,x'_n)$ and $y'=(y'_1,\ldots,y'_n)$ of $(G')^n$ with $y'$ a codimension-1 face of $x'$. Say $x'_t=y'_t$ for $t\neq m$ and some $m\in\{1,2,\ldots,n\}$, while $y'_m$ is a vertex of the edge $x'_m$ of $G'$. Then, for a vertex or edge $\mu$ of $G'$, we often compare the set $\mu(y')$ to the corresponding set $\mu(x')$ and, in doing so, it is convenient to keep in mind that $\mu(y')\cap\left(\{1,2,\ldots,n\}\setminus\{m\}\rule{0mm}{4mm}\right)=\mu(x')\cap\left(\{1,2,\ldots,n\}\setminus\{m\}\rule{0mm}{4mm}\right)$. In other words, the only difference between $\mu(y')$ and $\mu(x')$ would be the potential fact that $m$ lies in one of these sets without lying on the other one. In particular, checking a possible equality $\mu(y')=\mu(x')$ amounts to checking that $m$ lies in $\mu(y')$ if and only if $m$ lies in $\mu(x')$. The use of these observations will be explicit in the following proof but, later in the paper, we will use the observations without further notice.
}\end{remark}

\begin{proposition}[Rank stability for ``$\iota$-type'' faces]\label{CorFaceOp}
Let $m$, $x'$ and $y'$ be as in Remark \ref{aclaracion}. If $x'_m=e_j$ and $y'_m=\iota(e_j)$ for some $j\in\{1,2,\ldots,l\}$, then $\eta(w,\overline{y'})=\eta(w,\overline{x'})$. Furthermore the equality
\begin{equation}\label{comparacion}
v_s(\overline{y'})-e_s(y')=v_s(\overline{x'})-e_s(x')
\end{equation}
holds for all $s\in\{1,2,\ldots,l\}$, except when $\{j,s\}=\{i,l\}$ and $H=C$.
\end{proposition}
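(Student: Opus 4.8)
The plan is to reduce both assertions to a computation at the single coordinate $m$ in which $x'$ and $y'$ differ. By Remark~\ref{aclaracion}, for any vertex or edge $\mu$ of $G'$ the sets $\mu(y')$ and $\mu(x')$ coincide away from $m$, so it suffices to decide whether $m\in\mu(y')$ and whether $m\in\mu(x')$ for the relevant $\mu$; recall here that $x'_m=e_j$ and $y'_m=\iota(e_j)$. I would first record the incidence data of $H'$ used throughout: the only edges of $G'$ whose closure contains $w$ are $e_1$ and $e_{i+1}$; for every $j$ one has $\overline{e_j}=\{\iota(e_j),\tau(e_j)\}$ (two distinct vertices, as no $e_j$ is a loop), with $\tau(e_j)=v_j$, while $\iota(e_j)=v_{j-1}$ for $j\notin\{1,i+1\}$ and $\iota(e_1)=\iota(e_{i+1})=w$; and the only coincidence among $v_1,\dots,v_l$ is the identification $v_i=v_l$, which occurs exactly when $H=C$. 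In particular $\iota(e_s)\neq v_s$ for every $s$, since $\iota(e_s)$ is either $w$ or $v_{s-1}\neq v_s$.

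For the first assertion, $m\in w(\overline{x'})$ iff $w\in\overline{x'_m}=\overline{e_j}$, i.e.\ iff $j\in\{1,i+1\}$; and $m\in w(\overline{y'})$ iff $w\in\overline{y'_m}=\{\iota(e_j)\}$, i.e.\ iff $\iota(e_j)=w$, again iff $j\in\{1,i+1\}$. The two conditions agree, so $w(\overline{y'})=w(\overline{x'})$, whence $\eta(w,\overline{y'})=\eta(w,\overline{x'})$.

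For the second assertion, fix $s\in\{1,\dots,l\}$. Since $v_s$ is a vertex of $e_s$, we have $e_s(y')\subseteq v_s(\overline{y'})$, so the set $v_s(\overline{y'})-e_s(y')$ makes sense; as $y'_m=\iota(e_j)$ is a vertex, hence $\neq e_s$, it follows that $m\in v_s(\overline{y'})-e_s(y')$ iff $v_s=\iota(e_j)$. On the other side, $m\in v_s(\overline{x'})-e_s(x')$ iff $v_s\in\overline{e_j}$ and $x'_m=e_j\neq e_s$, i.e.\ iff $v_s\in\{\iota(e_j),\tau(e_j)\}$ and $j\neq s$. So I must compare ``$v_s=\iota(e_j)$'' with ``$v_s\in\{\iota(e_j),\tau(e_j)\}$ and $j\neq s$''. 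If $v_s=\iota(e_j)$ then $j\neq s$ automatically, since otherwise $v_s=\iota(e_s)$, which is impossible; so the first condition implies the second. Conversely, suppose $v_s\in\overline{e_j}$ and $j\neq s$; if $v_s=\iota(e_j)$ we are done, and otherwise $v_s=\tau(e_j)=v_j$ with $s\neq j$, which forces $\{s,j\}=\{i,l\}$ and $H=C$ — precisely the excepted case (and there the two conditions do disagree, so (\ref{comparacion}) genuinely fails). Finally, if $v_s\notin\overline{e_j}$ both conditions are false. Hence, outside the stated exception, $m\in v_s(\overline{y'})-e_s(y')$ exactly when $m\in v_s(\overline{x'})-e_s(x')$, and Remark~\ref{aclaracion} upgrades this to the set equality~(\ref{comparacion}).

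The step I expect to be most delicate is the bookkeeping around the two $w$-incident edges $e_1$ and $e_{i+1}$ — whose source is $w$ rather than a vertex $v_{s-1}$ — interacting with the identification $v_i=v_l$ in the cycle case: one must make sure that a failure of~(\ref{comparacion}) can only stem from that identification (and so is confined to $\{j,s\}=\{i,l\}$ with $H=C$), and cannot slip in through a degenerate incidence at $w$. The elementary facts $\tau(e_j)=v_j$, $\iota(e_s)\neq v_s$, and ``the only coincidence among the $v_s$ is $v_i=v_l$'', all read off from the explicit description of $H$ and $H'$ in Section~2, are what make this go through cleanly.
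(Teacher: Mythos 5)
Your proof is correct and follows essentially the same route as the paper's: reduce everything to the single coordinate $m$ via Remark~\ref{aclaracion}, observe that $m\in v_s(\overline{y'})-e_s(y')$ iff $v_s=\iota(e_j)$ while $m\in v_s(\overline{x'})-e_s(x')$ iff $v_s\in\overline{e_j}$ with $s\neq j$, and note the first condition implies the second with the converse failing exactly when $\{j,s\}=\{i,l\}$ and $H=C$ (and likewise for the $w$-count). Your added check that the equality genuinely fails in the exceptional case goes slightly beyond what the paper asserts, but it is accurate and harmless.
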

\begin{proof}
The equality $\eta(w,\overline{y'})=\eta(w,\overline{x'})$ is elementary in view of Remark \ref{aclaracion}. Indeed,
$$
m\in\overline{w}(y')\Leftrightarrow y'_m=w\Leftrightarrow j\in\{1,i+1\}\Leftrightarrow x'_m\in\{e_1,e_{i+1}\}\Leftrightarrow m\in\overline{w}(x').
$$
On the other hand, for $z'=(z'_1,\ldots,z'_n)$, the set $v_s(\overline{z'})-e_s(z')$ consists of the coordinates $t\in\{1,2,\ldots,n\}$ with $v_s\in\overline{z'_t}$ and $z'_t\neq e_s$. So, as in Remark \ref{aclaracion}, (\ref{comparacion}) could fail only if $m$ lies on one of the sets in (\ref{comparacion}) without lying on the other. Now, $m\in v_s(\overline{y'})-e_s(y')$ if and only if
\begin{equation}\label{rangoestable1}
v_s=\iota(e_j)
\end{equation}
whereas $m\in v_s(\overline{x'})-e_s(x')$ if and only if
\begin{equation}\label{rangoestable2}
v_s\in\overline{e_j}\text{\:\:\: with \:\:\:}s\neq j.
\end{equation}
The result follows by noticing that (\ref{rangoestable1}) implies (\ref{rangoestable2}), whereas the only way in which (\ref{rangoestable2}) holds without (\ref{rangoestable1}) holding is precisely when $\{j,s\}=\{i,l\}$ and $H=C$.
\end{proof}

\begin{corollary} \label{FieldWD}
In Proposition \ref{CorFaceOp}, $x'$ is a rank-$j$ external cell of  $\DF(G',n)$ if and only if so is $y'$.
\end{corollary}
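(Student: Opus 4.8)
The plan is to deduce Corollary~\ref{FieldWD} directly from Proposition~\ref{CorFaceOp}, exploiting the fact that rank is determined entirely by the quantities $\eta(w,\overline{\,\cdot\,})$, $\eta(v_s,\overline{\,\cdot\,})$ and $\eta(e_s,\cdot)$, all of which (in the relevant combinations) are preserved when passing between $x'$ and its $\iota$-type face $y'$. First I would record what has to be checked: by definition, $\operatorname{rank}(x')=j$ means $x'$ is external, $\eta(v_j,\overline{x'})=\eta(e_j,x')$, and $\eta(v_s,\overline{x'})>\eta(e_s,x')$ for $1\le s<j$; and similarly for $y'$. So I must show (1) $x'$ external $\iff$ $y'$ external, and (2) for every $s\le j$ the sign of $\eta(v_s,\overline{\,\cdot\,})-\eta(e_s,\cdot)$ agrees for $x'$ and $y'$.

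For (1): by Lemma~\ref{spelled}, externality of a cell $z'$ is the statement that $\eta(w,\overline{z'})+\eta(v_1,\overline{z'})-\eta(e_1,z')\ge k$ or $\eta(w,\overline{z'})+\eta(v_{i+1},\overline{z'})-\eta(e_{i+1},z')\ge k$. Proposition~\ref{CorFaceOp} gives $\eta(w,\overline{y'})=\eta(w,\overline{x'})$ and $\eta(v_s,\overline{y'})-\eta(e_s,y')=\eta(v_s,\overline{x'})-\eta(e_s,x')$ for all $s\in\{1,\dots,l\}$ with the single exception $\{j,s\}=\{i,l\}$ and $H=C$. Since $s\in\{1,i+1\}$ in the externality criterion, the exceptional case would require $\{j,s\}=\{i,l\}$ with $s\in\{1,i+1\}$; I would note that $i=l$ is impossible (as $\ell\ge i$ and $\ell\in\{l,l-1\}$, or more directly because $v_i$ and $v_l$ are distinct endpoints of $H'$ unless $H=C$, and in the $H=C$ case $v_i=v_l$ makes the pair $\{i,l\}$ degenerate), so $\{j,s\}=\{i,l\}$ forces $s=l\ge i+1$, hence $s\ne 1$ and $s\ne i+1$ (the latter since $l>i$ forces $l\ne i+1$ only when $l>i+1$; when $l=i+1$ we are in the situation $i=l-1$, where one checks separately using Proposition~\ref{RankRestC} that $j\le l-2<l$, so $\{j,s\}=\{i,l\}$ cannot occur). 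Either way the exceptional index never enters the externality formulas, so the two defining inequalities of Lemma~\ref{spelled} hold for $x'$ exactly when they hold for $y'$, giving (1).

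For (2): the same equality $\eta(v_s,\overline{y'})-\eta(e_s,y')=\eta(v_s,\overline{x'})-\eta(e_s,x')$ from Proposition~\ref{CorFaceOp} shows that the property $R(s)$ (vanishing of this difference) holds for $x'$ iff it holds for $y'$, for every $s\in\{1,\dots,\ell\}$ not hit by the exceptional case. I would argue that for indices $s\le j$ the exceptional case $\{j,s\}=\{i,l\}$ with $H=C$ never arises: it would need $j\in\{i,l\}$, but if $j=l$ then $j>\ell$ contradicting Proposition~\ref{RankWD} (which puts $\operatorname{rank}(x')\le\ell\le l-1$ when $H=C$), and if $j=i$ and $s=l$ then $s=l>j=i$ is outside the range $s\le j$ we care about; moreover when $i=l-1$, Proposition~\ref{RankRestC} forces $j\le l-2$, again avoiding the issue. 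Hence for all $s\le j$ the difference $\eta(v_s,\overline{\,\cdot\,})-\eta(e_s,\cdot)$ is the same for $x'$ and $y'$, so $R(j)$ holds and $R(s)$ fails for $s<j$ for $x'$ iff the same holds for $y'$; combined with (1) this says $\operatorname{rank}(x')=j\iff\operatorname{rank}(y')=j$, proving the corollary.

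The main obstacle is bookkeeping the exceptional case $\{j,s\}=\{i,l\}$, $H=C$ in Proposition~\ref{CorFaceOp}: I need to be confident that this never interferes either with the externality criterion (indices $1$ and $i+1$) or with the rank-relevant indices ($s\le j$). The cleanest way to dispatch it is to invoke Proposition~\ref{RankWD} (rank $\le\ell$) and Proposition~\ref{RankRestC} (rank $\le l-2$ when $i=l-1$, $H=C$) to guarantee $j<l$ whenever $H=C$, so that $j\ne l$; then $\{j,s\}=\{i,l\}$ forces $j=i$, $s=l$, and $s=l>i=j$ lies strictly above the window $s\le j$ that governs the rank, while also $l\notin\{1,i+1\}$ so it is irrelevant to externality. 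Once this is in place the rest is immediate from the displayed identities of Proposition~\ref{CorFaceOp}.
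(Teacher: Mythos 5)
Your rank-comparison step and the implication ``$x'$ external $\Rightarrow$ $y'$ external'' follow essentially the paper's route (same use of Proposition \ref{CorFaceOp}, with the exceptional case $\{j,s\}=\{i,l\}$, $H=C$ dispatched via Propositions \ref{RankWD} and \ref{RankRestC}). But there is a genuine gap in the converse implication: you treat ``external'' as synonymous with the inequalities of Lemma \ref{spelled}, whereas those inequalities characterize externality only among cells that are already known to lie in $\DF^k(G',n)$ (they come from Lemma \ref{ExtCell}, whose hypothesis is that $x'$ is a cell of $\DF^k(G',n)$). When $y'$ is the cell assumed external and you pass to the coface $x'$, replacing the vertex entry $\iota(e_j)$ by the edge $e_j$ can create a new $k$-fold collision at $\tau(e_j)=v_j$, so membership of $x'$ in $\DF^k(G',n)$ is not automatic; it is exactly what makes this direction of the corollary nontrivial, and it is what the matching construction in the next section needs (the partner of a redundant external cell must again be a cell of $\DF^k(G',n)$). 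Your proposal never addresses this; transferring the inequalities of (\ref{simplificadas}) alone does not yield externality for a cell that might not belong to $\DF^k(G',n)$ at all, and even speaking of $\operatorname{rank}(x')$ presupposes it.

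The paper devotes the entire second half of its proof to this point: the only possible new collision in $x'$ is at $v_j$, and it is ruled out by combining the transferred identity (\ref{sisucede}) at $s=j$ (which gives $\eta(v_j,\overline{x'})=\eta(e_j,x')$, since $y'$ has rank $j$) with the estimate $\eta(e_j,x')=\eta(e_j,y')+1\leq\eta(e_j,y')+\eta(\iota(e_j),y')\leq\eta(\iota(e_j),\overline{y'})\leq k-1$, where the middle inequality uses $y'_m=\iota(e_j)$ and the last one uses that $y'$ has no $k$-fold collisions. Once $x'\in\DF^k(G',n)$ is established, its externality follows either from Lemma \ref{ExtCell} (as $\widehat{y'}$ is a face of $\widehat{x'}$) or by transferring the inequalities as you do. Adding this membership argument would repair your proof; without it, the ``if'' direction of the corollary is unproven.
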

\begin{proof}
Assume either $x'$ or $y'$ is external of rank $j$. By Proposition \ref{CorFaceOp}, the only situation where the assertion
\begin{equation}\label{sisucede}
\eta(v_s,\overline{y'})-\eta(e_s,y')=\eta(v_s,\overline{x'})-\eta(e_s,x'),\text{ for all } s\in\{1,2,\ldots,j\}
\end{equation}
would fail is with $H=C$ and $\{j,s\}=\{i,l\}$. But such a case is impossible because $s\leq j\leq l-1$, as the rank function is bounded by $l-1$ when $H=C$. Therefore (\ref{sisucede}) holds, this controling ranks, and we only need to argue that the externality of $x'$ (respectively, $y'$) implies the externality of $y'$ (respectively, $x'$).

\smallskip
Assume first that $x'$ is a (rank-$j$) external cell of $\DF^k(G',n)$. Since $y'$ is a face of $x'$, $y'$ is a cell of $\DF^k(G',n)$. Furthermore $\eta(w,\overline{x'})=\eta(w,\overline{y'})$, by Proposition \ref{CorFaceOp}. Thus, in view of (\ref{simplificadas}), the externality of $y'$ will follow once we check that
\begin{equation}\label{externabilidad}
\eta(v_s,\overline{y'})-\eta(e_s,y')=\eta(v_s,\overline{x'})-\eta(e_s,x')
\end{equation}
holds for $s\in\{1,i+1\}$. But, according to Proposition \ref{CorFaceOp}, if (\ref{externabilidad}) fails for $s\in\{1,i+1\}$, then $H=C$ and $\{j,s\}=\{i,l\}$, so that in fact $j=i$ and $s=l=i+1$. But then $rank(x')=j=l-1$, which contradicts Proposition \ref{RankRestC}.

\smallskip
Assume next that $y'$ is a (rank-$j$) external cell of $\DF^k(G',n)$. Since $\widehat{y'}$ is a face of\tilde$\widehat{x'}$, it suffices to show that $x'$ is a cell of  $\DF^k(G',n)$ as, then, the externality of\tilde$x'$ follows from that of $y'$ and in view of Lemma \ref{ExtCell}. We thus prove that $x'$ has no $k$-fold collisions. Actually, since $y'$ has no $k$-fold collisions, a possible $k$-fold collision in $x'$ would have to be in $v_j$, as a result of expanding $y'_m=\iota(e_j)$ to $x'_m=e_j$. All together, we only have to check the inequality
\begin{equation}\label{ahorasilounicoademostrar}
\eta(v_j,\overline{x'})\leq k-1.
\end{equation}
With this in mind, start by noticing that the case $s=j$ in (\ref{sisucede}) and the fact that $y'$ is a rank-$j$ cell imply
$\eta(v_j,\overline{x'})-\eta(e_j,x')=\eta(v_j,\overline{y'})-\eta(e_j,y')=0$, i.e., $\eta(v_j,\overline{x'})=\eta(e_j,x')$. This and the fact that $x'$ has exactly one more cell $e_j$ than $y'$ yields (\ref{ahorasilounicoademostrar}), because
\begin{align*}
    \eta(e_j,x')=\eta(e_j,y')+1\leq \eta(e_j,y')+\eta(\iota(e_j),y')\leq \eta(\iota(e_j),\overline{y'})\leq k-1,
\end{align*}
as $y'$ has no $k$-fold collisions.
\end{proof}

We are now in position to construct our perfect pairing $W=\{W_j\}_j$. In what follows $x'=(x'_1,\ldots,x'_n)$ is a rank-$j$ external cell of $\DF^k(G',n)$. Based on Lemma \ref{paraelcampo}, we let $t(x')$ be the largest $t\in\{1,2,\ldots,n\}$ such that $x'_t\in\{\iota(e_j),e_j\}$, i.e.,
$$
t(x'):=\max\left(\iota(e_j)(x')\cup e_j(x')\rule{0mm}{4mm}\right).
$$
If $t(x')\in e_j(x')$, we construct the pair $(y',x')\in W_j$, where $y'$ is obtained from $x'$ by replacing $x'_{t(x')}=e_j$ by $\iota(e_j)$. Otherwise, $t(x')\in \iota(e_j)(x')$ and we construct the pair $(x',z')\in W_j$, where $z'$ is obtained from $x'$ by replacing $x'_{t(x')}=\iota(e_j)$ by $e_j$. We also use the notation $t_j(x')$, instead of the simpler $t(x')$, in order to stress the fact that $rank(x')=j$. In view of Corollary \ref{FieldWD}, the construction above is well-defined and determines a (partial) matching $W=\{W_j\}_j$ on the cells of $\DF^k(G',n)$ in such a way that a pair of external cells share rank and $t$-value, and whose unpaired cells assemble the subcomplex $Y$ homeomorphic to $\DF^k(G,n)$ in Theorem \ref{StabDiscThm}.

\smallskip
Following standard notation in discrete Morse theory, the Hasse diagram\tilde$H$ of the face poset of $\DF^k(G',n)$ is thought of as an oriented graph, with oriented edges $a\searrow b$ whenever\tilde$b$ is a codimension-1 face of $a$. The $W$-modified Hasse diagram $H_W$ is the oriented graph obtained from $H$ by reversing the orientation of edges corresponding to pairs of $W$. We then write $y'\nearrow x'$ and $x'\nearrow z'$ instead of $(y',x')\in W$ and $(x',z')\in W$, reserving the symbol ``$\searrow$'' for non-reversed arrows in $H_W$. Furthermore, any cell which is the head (tail) of a reversed Hasse arrow is called collapsible (redundant), while unpaired cells (namely those of $Y$) are called critical.

\section{Acyclicity}
Throughout this final section we assume that $G$ is $(k,n)$-sufficiently subdivided, so that the rank function and the resulting pairing $W$ are well-defined. Our final task, addressed below, is a proof of the acyclicity of $W$. With this in mind, note that a potential cycle
$$\alpha_0\nearrow\beta_0\searrow\alpha_1\nearrow\beta_1\searrow\dots \searrow\alpha_n\nearrow\beta_n\searrow\alpha_0
$$
in $H_W$ would consist exclusively of external (non-critical) cells $\alpha_j$ and $\beta_j$. The following key result is the basis for describing the dynamics ---and, eventually,  non-existence--- of these alleged cycles.

\begin{lemma} \label{AcyclicLemma}
Let $x'=(x'_1,\ldots,x'_n)$ and $y'=(y'_1,\ldots,y'_n)$ be external cells of respective ranks $j$ and $j-h$, with $x'$ collapsible and $y'$ redundant. Assume that $x'\searrow y'$ is an edge of\tilde$H_W$, with $y'$ obtained from $x'$ by replacing an edge $f=x'_t$ of $x'$ by a vertex $u=y'_t$ of $f$.
\begin{itemize}
\item[\emph{(A)}] If $h<0$, then $f=e_{j}$ and $u=v_{j}$.
\item[\emph{(B.1)}] If $h=0$ and $j\in\{1,i+1\}$, then $u=w$ and $f=e_{j'}$ for $j'\in\{1,i+1\}-\{j\}$.
\item[\emph{(B.2)}] If $h=0$ and $j\not\in\{1,i+1\}$, then $u=v_{j-1}$ and $f=e_{j-1}$.
\item[\emph{(C.1)}] If $h=1$ and $j=i+1$, then $u\neq v_i$, $v_i$ is a vertex of $f$, and either $f$ is not an edge of $H'$ or, else, $f=e_l$ and $i<l-2$ (note that $H=C$ is forced in the latter case).
\item[\emph{(C.2)}] If $h=1$ and $j\neq i+1$, then
$f=e_{j}$ and $u=v_{j}$.
\end{itemize}
When $h\geq2$, one of the following three mutually-disjoint situations must hold:
    \begin{itemize}
        \item[\emph{(D.1)}] $f=e_{s}$ and $u=v_{s}$ where $i+1\neq s=rank(y')+1\leq j-1$.
        \item[\emph{(D.2)}] $f$ is not an edge of $H'$, $u\neq v_i$, $v_i$ is a vertex of $f$, and $rank(y')=i\leq j-2$.
        \item[\emph{(D.3)}] $f=e_l$ with $j+1<l$, $u\neq v_i$, $rank(y')=i\leq j-2$ and $H=C$.
    \end{itemize}
\end{lemma}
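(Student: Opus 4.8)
The plan is to reduce everything to the behaviour of the \emph{defect numbers} $D_s(z'):=\eta(v_s,\overline{z'})-\eta(e_s,z')$, $s\in\{1,\dots,l\}$. Since $e_s(z')\subseteq v_s(\overline{z'})$ one has $D_s(z')\geq0$, and by definition $rank(z')$ is the least $s$ with $D_s(z')=0$. Passing from $x'$ to the codimension-$1$ face $y'$ only alters the coordinate $t$, from an edge $f$ to one of its endpoints $u$. First I would track that single coordinate (keeping $v_i=v_l$ in mind when $H=C$, and Remark~\ref{aclaracion} throughout) to obtain the change formula: for every $s$, $D_s(y')-D_s(x')\in\{-1,0\}$, \emph{except} that $D_s(y')-D_s(x')=+1$ precisely when $f=e_s$ and $u=\tau(e_s)=v_s$; moreover a drop $D_s(y')-D_s(x')=-1$ forces $f\neq e_s$, $u\neq v_s$ and $v_s\in\overline{f}$. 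In particular, since $D_j(x')=0$, we get $D_j(y')\in\{0,1\}$, equal to $1$ exactly in the exceptional case $f=e_j,\,u=v_j$; this already separates the cases by the sign of $h$.

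Next I would turn the standing hypotheses into coordinate statements: collapsibility of $x'$ reads $x'_{t_j(x')}=e_j$; redundancy of $y'$ reads $y'_{t_{rank(y')}(y')}=\iota(e_{rank(y')})$; and non-reversedness of $x'\searrow y'$ says $y'$ is not the (unique) $W$-partner of $x'$. Using that replacing $x'_t=f$ by $u$ moves $t$ into, out of, or within the sets $\iota(e_s)(\cdot)$ and $e_s(\cdot)$ in a way visible from the definitions, two facts drop out. (i) If $f=e_j$ then $u=\iota(e_j)$ is impossible: when $t=t_j(x')$ this is exactly the reversed arrow, and otherwise it leaves $\iota(e_j)(\cdot)\cup e_j(\cdot)$ and hence $t_j$ unchanged, so $y'_{t_j(y')}=x'_{t_j(x')}=e_j\neq\iota(e_j)$, against redundancy; therefore $f=e_j$ forces $u=v_j$. (ii) If $f\neq e_j$ and $u=v_j$, the same set and $t_j$ are unchanged, giving $y'_{t_j(y')}=e_j$, which contradicts $y'$ being redundant \emph{whenever} $rank(y')=j$; likewise, if $f\neq e_j$ and $u\notin\{v_j,\iota(e_j)\}$ one again gets $y'_{t_j(y')}=e_j$.

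Then I would split on $h$ and read off $f,u$. If $f=e_j$ then $u=v_j$ by (i), so $D_j(y')=1$; for $j\in\{1,i+1\}$ this is the only defect that changes, so $rank(y')>j$ and $h<0$, i.e.\ (A); for $j\notin\{1,i+1\}$ also $D_{j-1}$ drops by $1$, whence $rank(y')=j-1$ (case (C.2), forcing $j\neq i+1$) if $D_{j-1}(x')=1$, and $rank(y')>j$, i.e.\ (A), if $D_{j-1}(x')\geq2$. If $f\neq e_j$ then $D_j(y')=0$, so $h\geq0$; write $r:=rank(y')=j-h$. For $h=0$, (ii) and the redundancy of $y'$ force $u=\iota(e_j)$ with $f$ incident to $\iota(e_j)$ and $f\neq e_j$; since the only primitive $v_s$'s are $v_i,v_l$, an interior $\iota(e_j)=v_{j-1}$ has only the incident edges $e_{j-1},e_j$, so $f=e_{j-1}$ (case (B.2)), while for $j\in\{1,i+1\}$ one has $\iota(e_j)=w$ with incident edges $e_1,e_{i+1}$, giving (B.1). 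For $h\geq1$, $rank(y')=r$ forces $D_r(x')=1$ and $D_r(y')-D_r(x')=-1$, so $f$ is incident to $v_r$, $f\neq e_r$, $u\neq v_r$: if $v_r$ is interior to $H'$ (automatic unless $r=i$) the only option is $f=e_{r+1}$, $u=v_{r+1}$ --- case (C.2) if $h=1$, case (D.1) if $h\geq2$, with $r+1\neq i+1$ since $v_r$ is interior; if $r=i$, so $v_i$ is primitive and $h=j-i$, then $f$ is either an off-$H'$ edge at $v_i$ (with $u$ its other endpoint), or, only when $H=C$, the edge $e_l$ (with $u=v_{l-1}$) --- case (C.1) if $h=1$, cases (D.2)--(D.3) if $h\geq2$. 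The inequalities $r+1\leq j-1$, $i\leq j-2$ and $j=i+1$ are just $h\geq2$, resp.\ $h=1$; and checking that no $D_s$ with $s<r$ drops (so that $rank(y')$ is exactly $r$, equal to $i$ in the primitive case) again uses that interior vertices have exactly two incident edges.

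The genuinely non-formal step --- which I expect to be the main obstacle --- is the \emph{strict} inequalities $i<l-2$ in (C.1) and $j+1<l$ in (D.3): one must rule out the boundary configurations $H=C$, $f=e_l$, $j+1=l$ (and, for (C.1), $i=l-2$). When $i=l-1$ this is immediate from Proposition~\ref{RankRestC}, which forbids $rank(x')=i+1$; the remaining cases have $j=l-1$ and are excluded by re-running the $H=C$ count from the proof of Proposition~\ref{RankWD}. There the term $M(x')=\eta(v_{l-1},x')+\eta(e_l,x')$ appearing in (\ref{primitivo}) is now $\geq1$ (since $x'_t=e_l$), while one still has $D_s(x')\geq1$ for $1\leq s\leq l-2$ and one of the inequalities in (\ref{simplificadas}); plugging these into $n\geq S+\eta(ext_i,x')$ as in (\ref{primitivo})--(\ref{thetwosituations}) pushes the lower bound for $n$ up to $k+l-3$, contradicting condition (S.2) of Definition~\ref{sufficietlysubdivided}, since the cycle $C$ has $l-1$ vertices. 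A short separate check handles the degenerate index $i=1$ in this count, and with these exclusions in hand the case analysis of the previous paragraph is precisely the list (A)--(D.3).
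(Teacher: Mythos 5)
Your proposal is correct, and for the bulk of the case analysis it runs parallel to the paper's own proof: the paper likewise tracks the single coordinate $t$ to see how the sets $v_s(\overline{\,\cdot\,})-e_s(\cdot)$ can change (your change formula for the defects $D_s$ is just the cardinality version of this), uses the degree-two structure of the interior vertices of $H'$ to pin down $f$ and $u$, and eliminates the $h=0$-type configurations exactly as in your facts (i)--(ii), via the invariance of $\iota(e_j)(\cdot)\cup e_j(\cdot)$, hence of the $t_j$-coordinate, against the collapsible/redundant hypotheses. One small omission to patch: in fact (i), in the sub-case $f=e_j$, $u=\iota(e_j)$, $t\neq t_j(x')$, you invoke redundancy of $y'$ \emph{at rank $j$}, so you should record that this replacement leaves every $D_s$ with $s\leq j$ unchanged, whence $rank(y')=j$; your change formula gives this in one line.

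Where you genuinely diverge is the strictness claims $i<l-2$ in (C.1) and $j+1<l$ in (D.3), which you single out as the main obstacle. The paper disposes of the boundary configuration ($H=C$, $f=e_l$, $rank(x')=l-1$) in two lines: $rank(x')=l-1$ means property $R(l-1)$ holds for $x'$, i.e.\ $v_{l-1}(\overline{x'})=e_{l-1}(x')$, while $x'_t=e_l$ places $t\in v_{l-1}(\overline{x'})\setminus e_{l-1}(x')$; together with $rank(x')\leq l-1$ from Proposition~\ref{RankWD} this yields both strict inequalities. Your alternative --- re-running the $H=C$ count from the proof of Proposition~\ref{RankWD} with the extra contribution $M(x')\geq1$ and contradicting (S.2) --- does work (the sub-cases $i=1$ and $i+1=l-1$ need the slightly different bookkeeping you allude to, but the bound $n\geq k+l-3$ still comes out), yet it is heavier than necessary. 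Indeed, with your own notation, in this boundary situation $M(x')=\eta(v_{l-1},x')+\eta(e_l,x')=\eta(v_{l-1},\overline{x'})-\eta(e_{l-1},x')=D_{l-1}(x')$, so $\eta(e_l,x')\geq1$ already contradicts $rank(x')=l-1$ outright, with no counting and no further appeal to the subdivision hypothesis beyond Proposition~\ref{RankWD}.
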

\begin{proof}
{\bf(A)} Assume $h<0$, so that $rank(y')>j=rank(x')$. Then
\begin{equation}\label{Auno}
v_j(\overline{y'})-e_j(y')\neq \varnothing=v_j(\overline{x'})-e_j(x'),\end{equation}
which forces $v_j(\overline{y'})-e_j(y')=\{t\}$, in view of Remark \ref{aclaracion}, so that $v_j=u$. In particular $v_j$ is a vertex of $f$, and then the equality in (\ref{Auno}) forces $f=e_j$.

\medskip\noindent{\bf(B)}
Assume $h=0$, so that $rank(y')=j=rank(x')$. We only prove the case having $j=i+1$, as the other two cases ($j=1$ and $j\not\in\{1,i+1\}$) are proved in an entirely parallel way. For instance, when $j\not\in\{1,i+1\}$, the argument below applies word for word replacing the subindices $1$ and $i+1$ by $j-1$ and $j$, respectively, and replacing $w$ by $v_{j-1}$. So, we assume in addition $j=i+1$. If $f\neq e_{1}, e_{i+1}$, then $w$ is not a vertex of $f$, so $w\neq u$ and we get $w(y')=w(x')$ and $e_{i+1}(y')=e_{i+1}(x')$. Hence $t_{i+1}(y')=t_{i+1}(x')$ with $x'_{t_{i+1}(x')}=y'_{t_{i+1}(y')}$, which is impossible, since $y'$ is redundant and $x'$ is collapsible. Therefore $f\in\{e_{1}, e_{i+1}\}$. We show next that the option $f=e_{i+1}$ also leads to a contradiction.

Assume  $f=e_{i+1}$. Then $u\neq v_{i+1}$, for otherwise $t\in v_{i+1}(\overline{y'})- e_{i+1}(y')$, which cannot happen as $y'$ has rank $i+1$. But $u$ is a vertex of $f=e_{i+1}$, so that $u=w$. The last two equalities then show that $t$ lies in $e_{i+1}(x')\cap w(y')$. Thus, by Remark \ref{aclaracion}, 
$$
w(y')\cup e_{i+1}(y')=w(x')\cup e_{i+1}(x'),
$$
which implies in turn that $t_{\max}:=t_{i+1}(y')=t_{i+1}(x')$. Note that $t\neq t_{\max}$, for otherwise we would have the directed edge  $y'\nearrow x'$, in contradiction to the assumed directed edge $x'\searrow y'$. But then $t<t_{\max}$, and thus $x'_{t_{\max}}=y'_{t_{\max}}$ which, as above, is impossible ($y'$ is redundant and $x'$ is collapsible).

Hence $f=e_1$ and, in particular,
\begin{equation}\label{medio}
e_{i+1}(y')=e_{i+1}(x').
\end{equation}
Additionally, the equality $u=w$ must hold for, otherwise, $w(y')=w(x')$ which, together with (\ref{medio}) implies $t_{i+1}(y')=t_{i+1}(x')$ with $x'_{t_{i+1}(x')}=y'_{t_{i+1}(y')}$, again a contradiction.

\medskip\noindent{\bf(C)} Assume $h=1$, so that $rank(x')=j$ and $rank(y')=j-1$ ($j\geq2$). The latter equality gives $v_{j-1}(\overline{y'})=e_{j-1}(y')$. In particular $v_{j-1}(y')=\varnothing$ and so
\begin{equation}\label{lacomun}
u\neq v_{j-1}.
\end{equation}
Additionally, both rank hypotheses give
$v_{j-1}(\overline{y'})-e_{j-1}(y')=\varnothing\neq v_{j-1}(\overline{x'})-e_{j-1}(x')$, which forces $v_{j-1}(\overline{x'})-e_{j-1}(x')=\{t\}$. Consequently, 
\begin{equation}\label{lasdos}
\mbox{$v_{j-1}$ must be a vertex of $f$ \, and \, $f\neq e_{j-1}$.}\end{equation}
When $j\neq i+1$, conditions (\ref{lacomun}) and (\ref{lasdos}) yield in fact $f=e_j$ and $u=v_j$, as asserted. Assume then $j=i+1$, so that (\ref{lacomun}) and (\ref{lasdos}) become $u\neq v_i$ and $f\neq e_i$ with $v_i$ a vertex of $f$. Then either $f$ is not an edge of $H'$ or, else, $f=e_l$ with $H=C$. In the latter case Proposition\tilde\ref{RankWD} gives $i+1=rank(x')\leq l-1$, i.e., $i\leq l-2$. Furthermore, the latter inequality must be a strict, for otherwise $rank(x')=l-1$ implying $v_{l-1}(\overline{x'})=e_{l-1}(x')$, which is impossible as $t\not\in e_{l-1}(x')$ (because $f=e_l$) and $t\in v_{l-1}(\overline{x'})$ (because $\iota(f)=v_{l-1}$).

\medskip\noindent{\bf(D)}
Assume $h\geq2$, so that $1\leq q:=rank(y')\leq j-2$ ($j\geq3$). The argument leading to (\ref{lacomun}) and (\ref{lasdos}) now gives $v_{q}(\overline{y'})-e_q(y')=\varnothing$, $v_{q}(\overline{x'})-e_q(x')=\{t\}$ together with
\begin{equation}\label{elresumen}
u\neq v_q, \:\: f\neq e_q, \:\: \text{and \, $v_q$ is a vertex of $f$.}
\end{equation}
When $q\neq i$, (\ref{elresumen}) forces $f=e_{q+1}$ and $u=v_{q+1}$, which fits (D.1). On the other hand, when $q=i$, we must have either \emph{(a)} $f=e_l$ with $H=C$ or, else, \emph{(b)} $f$ is not and edge of $H'$. The latter situation fits (D.2) in view of (\ref{elresumen}), whereas the former situation fits (D.3) as the inequality $j+1<l$ is forced. Indeed, as in {\bf(C)} above, Proposition \ref{RankWD} gives $j=rank(x')\leq l-1$, while an equality $rank(x')=l-1$ would imply $v_{l-1}(\overline{x'})=e_{l-1}(x')$, which is impossible because, under the conditions in \emph{(a)}, $t\in v_{l-1}(\overline{x'})\setminus e_{l-1}(x')$.
\end{proof}

The following consequence of Lemma \ref{AcyclicLemma} is a partial strengthening of the rank-stability property in Proposition \ref{CorFaceOp}.
\begin{corollary} \label{ConstRank}
Let $x'=(x'_1,\ldots,x'_n)$ be a collapsible external cell with $x'_t=e_s$ for some $t\in\{1,\dots ,n\}$ and where $s$ is the rank of some external cell $z'$ (possibly $z'\neq x'$). Assume that the $n$-tuple $y'$ obtained from $x'$ by replacing the $t$-th coordinate $e_s$ of $x'$ by $\iota(e_s)$ is a redundant external cell. Then
\begin{equation}\label{rankequality}
rank(y')=rank(x').
\end{equation}
If in addition $x'\searrow y'$ in $H_W$, then $\{s,rank(x')\}\in\{1,i+1\}$.
\end{corollary}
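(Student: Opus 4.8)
The plan is to read the statement off Lemma~\ref{AcyclicLemma}, using repeatedly the elementary fact that $\iota(e_s)\neq v_s$ for every $s\in\{1,2,\ldots,l\}$: indeed $\iota(e_s)=v_{s-1}\neq v_s$ when $s\notin\{1,i+1\}$, while $\iota(e_s)=w\neq v_s$ when $s\in\{1,i+1\}$, the subdivision vertex $w$ being distinct from every vertex of $G$. Throughout, put $f:=x'_t=e_s$ and $u:=y'_t=\iota(e_s)$, a vertex of $f$; recall also that the edges $e_1,\ldots,e_l$ of $H'$ are pairwise distinct (a consequence of the subdivision hypothesis), so that an identity $e_s=e_m$ forces $s=m$.

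First I would prove $rank(y')=rank(x')$. Since $y'$ is a codimension-$1$ face of the external cell $x'$ and both cells are external, the arrow joining them in $H_W$ is either the reversed arrow coming from $(y',x')\in W$ or the plain arrow $x'\searrow y'$. In the first case $rank(y')=rank(x')$ holds by construction, as $W$ pairs external cells of equal rank. In the second case I would invoke Lemma~\ref{AcyclicLemma} and argue that its integer $h$ vanishes, by ruling out the remaining alternatives. Cases (A), (C.2) and (D.1) are impossible, because each of them forces $f=e_m$ and $u=v_m$ for some index $m$, hence $s=m$ and $u=v_s$, against $u=\iota(e_s)\neq v_s$. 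Case (D.2) is impossible because it requires $f$ not to be an edge of $H'$, whereas $f=e_s$ is. Cases (C.1) and (D.3) are impossible too: each of them forces $f=e_l$, that is $s=l$, together with $H=C$, and then Proposition~\ref{RankWD} gives $rank(z')\leq\ell=l-1$, contradicting the hypothesis that $s=rank(z')$. Hence $h=0$.

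Finally, assume in addition that $x'\searrow y'$ in $H_W$. Then we are in the plain-arrow sub-case treated above, where $h=0$ was established, so Lemma~\ref{AcyclicLemma}(B) applies with $r:=rank(x')=rank(y')$. Sub-case (B.2), i.e.\ $r\notin\{1,i+1\}$, cannot occur: it would give $f=e_{r-1}$ and $u=v_{r-1}$, so $s=r-1$ and $u=v_s$, again contradicting $u=\iota(e_s)\neq v_s$. Thus we are in sub-case (B.1): $r\in\{1,i+1\}$ and $f=e_{r'}$ with $\{r'\}=\{1,i+1\}\setminus\{r\}$, whence $s=r'$ and $\{s,rank(x')\}=\{1,i+1\}$. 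The main thing to be careful about is the bookkeeping matching the six possible conclusions of Lemma~\ref{AcyclicLemma} against the rigid shape ``$f=e_s$, $u=\iota(e_s)$''; in particular, one must notice that the hypothesis ``$s$ is a rank'' (not merely an index in $\{1,\ldots,l\}$) is used exactly to discard the configurations $f=e_l$ that occur when $H$ is a cycle.
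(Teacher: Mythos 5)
Your proposal is correct and follows essentially the same route as the paper: dispose of the case $y'\nearrow x'$ by construction of $W$, then in the case $x'\searrow y'$ run through Lemma~\ref{AcyclicLemma}, eliminating (A), (B.2), (C.2), (D.1) via $u=\iota(e_s)\neq v_s$, (D.2) via $f=e_s$ being an edge of $H'$, and (C.1), (D.3) via Proposition~\ref{RankWD} together with the hypothesis that $s$ is a rank, leaving only (B.1). Your extra explicit checks (that $\iota(e_s)\neq v_s$ and that the edges $e_1,\ldots,e_l$ are pairwise distinct) are points the paper leaves implicit, but the argument is the same.
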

\begin{proof}
The asserted equality (\ref{rankequality}) holds by construction when $y'\nearrow x'$, so we can safely assume $x'\searrow y'$. Set $h:=rank(x')-rank(y')$, $j:=rank(x')$, $f:=e_s$ and $u:=\iota(e_s)$. Since $u=\iota(f)$ and $f=e_s$, possibilities (A), (B.2), (C.2), (D.1) and (D.2) in Lemma \ref{AcyclicLemma} are ruled out. On the other hand, in cases (C.1) and (D.3) we would have $H=C$ and $f=e_l$, i.e., $s=l$, which is impossible by Proposition \ref{RankWD} and the assumption that $s$ is the rank of some external cell. Therefore the situation in item (B.1) holds, which completes the proof.   
\end{proof}

\begin{proposition} \label{ConstDiff}
Let $y'_1\nearrow x'_1\searrow\dots \searrow y'_\nu\nearrow x'_\nu$ be a directed path in $H_W$ consisting of external cells of rank at most $i+1$. If $i+1$ is the rank of some external cell, then $v_{i+1}(\overline{z'})-e_{i+1}(z')=v_{i+1}(\overline{z''})-e_{i+1}(z'')$, for any $z',z''\in\{y'_1,x'_1,\ldots,y'_\nu,x'_\nu\}$
\end{proposition}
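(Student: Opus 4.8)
The plan is to show that along a single ``$W$-alternating'' step the quantity $v_{i+1}(\overline{\,\cdot\,})-e_{i+1}(\,\cdot\,)$ is preserved, and then iterate. There are two kinds of steps: the up-arrows $y'_r\nearrow x'_r$ and the down-arrows $x'_r\searrow y'_{r+1}$. For an up-arrow $y'_r\nearrow x'_r$, the cell $x'_r$ is collapsible of some rank $j=\operatorname{rank}(x'_r)\le i+1$, and $y'_r$ is obtained from it by replacing a coordinate $e_j$ by $\iota(e_j)$; Proposition \ref{CorFaceOp} (applied with the two cells swapped, so $j$ plays the role of the index in that statement) gives $v_s(\overline{y'_r})-e_s(y'_r)=v_s(\overline{x'_r})-e_s(x'_r)$ for every $s$, \emph{except} possibly when $H=C$ and $\{j,s\}=\{i,l\}$. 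In our situation $s=i+1\le l-1<l$ and $j\le i+1\le l-1<l$, so the exceptional case never occurs; hence the $s=i+1$ instance holds and the desired equality is preserved across every up-arrow.

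For a down-arrow $x'_r\searrow y'_{r+1}$, write $y'_{r+1}$ as obtained from $x'_r$ by collapsing an edge $f=(x'_r)_t$ to a vertex $u$. Here $x'_r$ is collapsible of rank $j=\operatorname{rank}(x'_r)\le i+1$ and $y'_{r+1}$ is redundant of rank $j-h$ with $h=\operatorname{rank}(x'_r)-\operatorname{rank}(y'_{r+1})$, so Lemma \ref{AcyclicLemma} applies and pins down $(f,u)$. The argument is a case check: in cases (A), (B.2), (C.2), (D.1) the pair $(f,u)$ is of the form $(e_m,v_m)$ or $(e_{j-1},v_{j-1})$ with $m<i+1$ (using $j\le i+1$ together with the inequalities $m=\operatorname{rank}(y')+1\le j-1$ in (D.1), $m=j\le i+1$ but in fact the sub-case forcing $j\le i$ in (C.2) when combined with rank $\le i+1$, etc.), so $f\ne e_{i+1}$ and $v_{i+1}$ is not a vertex of $f$ unless $f\in\{e_i,e_{i+2},e_{i+1}\}$ touches $v_{i+1}$; one then reruns the bookkeeping of Remark \ref{aclaracion} at the single altered coordinate $t$ to see that $t$ lies in $v_{i+1}(\overline{x'_r})-e_{i+1}(x'_r)$ iff it lies in $v_{i+1}(\overline{y'_{r+1}})-e_{i+1}(y'_{r+1})$. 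In case (B.1) we have $f=e_{j'}$, $u=w$ with $\{j,j'\}=\{1,i+1\}$, so $w$ is a vertex of $f$ but $v_{i+1}$ is a vertex of $f=e_{j'}$ only if $j'=i+1$; whether $j'=i+1$ or $j=i+1$, one checks directly that collapsing $e_{j'}$ to $w$ does not change membership of $t$ in $v_{i+1}(\overline{\,\cdot\,})-e_{i+1}(\,\cdot\,)$, because the rank-$(i+1)$ condition $v_{i+1}(\overline{\,\cdot\,})=e_{i+1}(\,\cdot\,)$ on the relevant cell already forces $t\notin v_{i+1}(\overline{\,\cdot\,})-e_{i+1}(\,\cdot\,)$ on that side, and on the other side the coordinate being altered is an $e_{j'}$ or $w$, never an $e_{i+1}$ nor a vertex of $e_{i+1}$ beyond what is recorded. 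The cases (C.1), (D.2), (D.3) all have $H=C$ and $f$ either off $H'$ or equal to $e_l$ with $v_i$ a vertex of $f$; since $i\ne i+1$ and $l\ne i+1$ and (by (S.2)/Proposition \ref{RankWD}) $e_l$ does not touch $v_{i+1}$, collapsing such an $f$ cannot affect $v_{i+1}(\overline{\,\cdot\,})-e_{i+1}(\,\cdot\,)$ at coordinate $t$ either, and again the equality is preserved.

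Having established invariance across each edge of the directed path, the conclusion is immediate by induction on the position of $z'$, $z''$ in the sequence $y'_1,x'_1,\ldots,y'_\nu,x'_\nu$: any two cells in the list are joined by a chain of up- and down-arrows along which $v_{i+1}(\overline{\,\cdot\,})-e_{i+1}(\,\cdot\,)$ is constant. (The hypothesis that $i+1$ is the rank of some external cell is exactly what is needed so that the rank-$(i+1)$ bound in Proposition \ref{RankWD}, or rather $\operatorname{rank}\le i+1\le\ell$, is non-vacuous and the up-arrow step via Proposition \ref{CorFaceOp} is valid with $s=i+1$; it is also used implicitly so that Corollary \ref{ConstRank}-type reasoning is available, should we prefer to phrase the $(B.1)$ step through that corollary.)

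\textbf{Main obstacle.} The delicate point is the down-arrow analysis: one must show that \emph{none} of the altered coordinates in any of the Lemma \ref{AcyclicLemma} cases (A)--(D.3) can be an $e_{i+1}$ or can toggle whether the altered index $t$ sits in $v_{i+1}(\overline{\,\cdot\,})$ while not sitting in $e_{i+1}(\,\cdot\,)$. This requires carefully combining the rank bound $j\le i+1$ with the explicit shape of $(f,u)$ in each case, and, in the $H=C$ cases, using (S.2) (through Proposition \ref{RankWD}) to know that $e_l$ is ``far'' from $v_{i+1}$. The up-arrow step, by contrast, is essentially a one-line invocation of Proposition \ref{CorFaceOp} together with the numerical inequality $i+1\le\ell<l$ that kills the exceptional case.
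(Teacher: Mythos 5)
Your overall plan is the same as the paper's: show that $v_{i+1}(\overline{\,\cdot\,})-e_{i+1}(\,\cdot\,)$ is unchanged across each up-arrow via Proposition~\ref{CorFaceOp} (using the hypothesis that $i+1$ is an actual rank value together with Proposition~\ref{RankWD} to kill the exceptional case $\{j,s\}=\{i,l\}$, $H=C$), and across each down-arrow by running through the cases of Lemma~\ref{AcyclicLemma}. The up-arrow half is fine. The problem is the down-arrow half, which is where the content of the proposition lies, and there your case analysis contains a concrete error: the claim that cases (C.1), (D.2), (D.3) ``all have $H=C$'' is false. The branches of (C.1) and (D.2) in which $f$ is not an edge of $H'$ occur just as well when $H=P$, so the inequality $l\neq i+1$ that you extract from Proposition~\ref{RankWD} is unavailable there. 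This matters: when $H=P$ and $i+1=l$, the vertex $v_{i+1}=v_l$ is a primitive endpoint of $H$, and an edge $f$ of $G'$ outside $H'$ incident to $v_i$ can have $v_{i+1}$ as its other endpoint $u$ (compatible with $(S.1)$ when $n=k$). In that situation your conclusion that collapsing $f$ ``cannot affect'' the difference set at coordinate $t$ is reached for the wrong reason: $t$ then lies in \emph{both} sets rather than in neither, and the equality survives only because $v_{i+1}$ being a vertex of $f$ forces $u=v_{i+1}$. That biconditional ---the paper's (\ref{quizasdosultimas}), which is also what settles your ``unless $f\in\{e_i,e_{i+2},e_{i+1}\}$ touches $v_{i+1}$'' exception in the (A)/(B.2)/(C.2)/(D.1) block via the identification (\ref{cerrado})--- is the missing idea; you gesture at it with ``rerun the bookkeeping'' but never establish it.

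A second, smaller flaw: in case (B.1) with $j=1$, $j'=i+1$ (the paper's case (j)), your justification is wrong, since neither cell involved has rank $i+1$ there (both have rank $1$), so ``the rank-$(i+1)$ condition on the relevant cell'' gives nothing, and the clause ``never an $e_{i+1}$'' fails because $f=e_{i+1}$. The correct and simpler reason is that $f=e_{i+1}$ puts $t$ into $e_{i+1}(x'_r)$, hence out of the difference set on that side, while $u=w$ keeps $t$ out of $v_{i+1}(\overline{y'_{r+1}})$ on the other. These slips are repairable, but as written the down-arrow analysis ---which you yourself identify as the main obstacle--- does not constitute a proof.
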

\begin{proof}
For $1\leq r\leq \nu$, let $j_r$ be the common rank of $y'_r$ and $x'_r$, so that
\begin{equation}\label{upperbound}
j_r\leq i+1.
\end{equation}
By construction of the field $W$, $x'_r$ is obtained from $y'_r$ by replacing, in some coordinate, a vertex-entry $\iota(e_{j_r})$ of $y'_r$ by the edge-entry $e_{j_r}$ in $x'_r$. By Proposition \ref{CorFaceOp}, the equality 
$v_{i+1}(\overline{x'_r})-e_{i+1}(x'_r)=v_{i+1}(\overline{y'_r})-e_{i+1}(y'_r)$ holds unless $H=C$ and $l\in\{j_r,i+1\}$. But such a restriction never holds in view of Proposition \ref{RankWD}, as both $j_r$ and $i+1$ are then actual rank values. It remains to prove
\begin{equation}\label{estabilidad}
v_{i+1}(\overline{x'_r})-e_{i+1}(x'_r)=v_{i+1}(\overline{y'_{r+1}})-e_{i+1}(y'_{r+1})
\end{equation}
when $r<\nu$. With this in mind, set $h_r:=rank(x'_r)-rank(y'_{r+1})$ and let us say that, in coordinate $t_r\in\{1,2,\ldots,n\}$, $x'_r$ has an edge $f_r$ while $y'_{r+1}$ has the vertex $u_r$ of $f_r$. Depending of the actual value of $j_r$, the following explicit possibilities arise:
\begin{itemize}
\item[(I)] $j_r=i+1$, so that $h_r\geq 0$ in view of (\ref{upperbound}). Lemma \ref{AcyclicLemma} then yields one of the following situations:
\begin{itemize}
\item[(a)] $h_r=0$, $u_r=w$ and $f_r=e_1$.
\item[(b)] $h_r=1$, $u_r\neq v_i$, $v_i$ is a vertex of $f_r$, and $f_r$ is not an edge of $H'$.
\item[(c)] $h_r=1$, $u_r\neq v_i$, $f_r=e_l$, $rank(y'_{r+1})=i<l-2$, and $H=C$.
\item[(d)] $h_r\geq2$, $u_r=v_s$ and $f_r=e_s$ with $s=rank(y'_{r+1})+1\leq j_r-1=i$.
%\item[(e)] $h_r\geq2$, $u_r\neq v_i$, $v_i$ is a vertex of $f_r$,  $f_r$ is not an edge of $H'$, and $rank(y'_{r+1})=i$.
%\item[(e)] $h_r\geq2$, $u_r\neq v_i$, $f_r=e_l$, $rank(y'_{r+1})=i\leq j-2<l-3$, and $H=C$.
\end{itemize}
Note that neither (D.2) nor (D.3), where the condition $i\leq j_r-2$ holds, are valid options under the current assumption $j_r=i+1$. %Furthermore, the upper bound for $rank(y'_{r+1})$ in (e) is slightly more precise than the corresponding one in (c). 
%On the other hand, (b) and (e) can be merged into 
%\begin{itemize} \item[(be)] $h_r\geq1$, $u_r\neq v_i$, $v_i$ is a vertex of $f_r$,  $f_r$ is not an edge of $H'$, and rank(y'_{r+1})=i$. \end{itemize}

\item[(II)] $1<j_r<i+1$. In this case Lemma \ref{AcyclicLemma} yields one of the following situations:
\begin{itemize}
\item[(e)] $h_r<0$, $f_r=e_{j_r}$ and $u_r=v_{j_r}$.
\item[(f)] $h_r=0$, $f_r=e_{j_r-1}$ and $u_r=v_{j_r-1}$.
\item[(g)] $h_r=1$, $f_r=e_{j_r}$ and $u_r=v_{j_r}$.
\item[(\hspace{.2mm}h)] $h_r\geq2$, $f_r=e_s$ and $u_r=v_s$ where $s=rank(y'_{r+1})+1\leq j_r-1$.
%\item[(j)] $h_r\geq2$, $f_r$ is not an edge of $H'$, $u_r\neq v_i$, $v_i$ is a vertex of $f_r$, and $rank(y'_{r+1})=i$.
%\item[(l)] $h_r\geq2$, $f_r=e_l$, $u_r\neq v_i$, $rank(y'_{r+1})=i$, $j_r<l-1$, and $H=C$.
\end{itemize}
As above, neither (D.2) nor (D.3) are valid options.

\item[(III)] $j_r=1$, so that $h_r\leq 0$. Lemma \ref{AcyclicLemma} now gives one of the following situations:
\begin{itemize}
\item[(i)] $h_r<0$, $f_r=e_1$ and $u_r=v_1$.
\item[(j)] $h_r=0$, $f_r=e_{i+1}$ and $u_r=w$.
\end{itemize}
\end{itemize}

In (j), equality $u_r=w$ implies $t_r\not\in v_{i+1}(\overline{y'_{r+1}})$, whereas equality $f_r=e_{i+1}$ implies $t_r\in e_{i+1}(x'_r)$. Therefore $t_r$ lies neither on the left hand-side nor on the right hand-side of (\ref{estabilidad}), so Remark \ref{aclaracion} gives the asserted equality in (\ref{estabilidad}). Likewise, for all remaining cases (a)--(i), we show below
\begin{equation}\label{quizasdosultimas}
f_r\neq e_{i+1}\:\:\:\text{and}\:\:\:\left(\mbox{$v_{i+1}$ is a vertex of $f_r\:\Longleftrightarrow\:u_r=v_{i+1}$}\rule{0mm}{4mm}\right),
\end{equation}
so (\ref{estabilidad}) follows by observing that (\ref{quizasdosultimas}) implies that either the two sets in (\ref{estabilidad}) contain $t_r$ or, else, none of them contains $t_r$.

We leave it for the reader the elementary verification that the first assertion in (\ref{quizasdosultimas}) holds in all cases (a)--(i). Furthermore the implication ``$\Leftarrow$'' in the second assertion of (\ref{quizasdosultimas}) is obvious, so we focus on the opposite implication. In cases (a), (c), (f), (h) and (i) direct inspection shows that, in fact, $v_{i+1}$ is not a vertex of $f_r$. (The same conclusion holds in the case of (b), though it requires some argumentation; see below.) On the other hand, in cases (d), (e) and (g) we have $f_r=e_s$ and $u_r=v_s$ with $s\leq i$, so that, if $v_{i+1}$ is a vertex of $f_r$, then the three conditions $H=C$, $s=i$ and $i+1=l$ are forced to hold and, in view of\tilde(\ref{cerrado}), yield the condition $u_r=v_s=v_i=v_l=v_{i+1}$ asserted in (\ref{quizasdosultimas}). Lastly, the situation in\tilde(b) is slightly special in that $v_i$ and $u_r$ are the two (different) vertices of $f_r$, which is an edge outside $H'$. (In particular $u_r\neq v_{i+1}$.) But then, if $v_{i+1}$ was a vertex of $f_r$, we would necessarily have $v_{i+1}=v_i$ so, again,
\begin{equation}\label{porfin}
H=C\text{\:\:\: and \:\:\:}i=l-1.
\end{equation}
But Proposition \ref{RankWD} and the first condition in (\ref{porfin}) give $j_r=rank(x'_r)\leq l-1$ which, taking into account condition $j_r=i+1$ in (b), yields $i\leq l-2$, in contradiction to the second condition in (\ref{porfin}).
\end{proof}

The proof of Theorem \ref{StabThm} is finally complete in view of:
\begin{theorem}
The field $W$ is gradient, i.e., the $W$-modified Hasse diagram $H_W$ is acyclic. 
\end{theorem}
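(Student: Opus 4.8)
The plan is to rule out a directed cycle
$$\alpha_0\nearrow\beta_0\searrow\alpha_1\nearrow\beta_1\searrow\dots\searrow\alpha_N\nearrow\beta_N\searrow\alpha_0$$
in $H_W$ by tracking how the rank varies along it. Every $\alpha_r$ is collapsible (tail of a reversed arrow) and every $\beta_r$ is redundant (head of a reversed arrow), and by construction $rank(\alpha_r)=rank(\beta_r)$. So the only place the rank can change is along a non-reversed arrow $\beta_r\searrow\alpha_{r+1}$; write $j_r:=rank(\beta_r)=rank(\alpha_r)$ and $h_r:=j_r-rank(\alpha_{r+1})$. Since the cycle closes up, $\sum_r h_r=0$, so either all $h_r=0$ or some $h_r<0$. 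The first thing to do is to deal with the former case: when every $h_r=0$, Corollary \ref{ConstRank} applies at each step $\alpha_r\searrow\beta_r$ (here the roles of $x'$ and $y'$ match, $\alpha_r$ collapsible, $\beta_r$ redundant, with $e_s=e_{j_r}$ and $j_r$ an actual rank value), forcing $\{j_r,j_r\}\subseteq\{1,i+1\}$, i.e.\ every rank in the cycle lies in $\{1,i+1\}$. Then Proposition \ref{ConstDiff} applies to the whole cycle and shows $v_{i+1}(\overline{z'})-e_{i+1}(z')$ is constant along the cycle; combining this with Proposition \ref{CorFaceOp}/Corollary \ref{FieldWD} one sees the coordinate being modified at the reversed arrows is the same throughout, so $t_{j}(\alpha_r)$ is forced to be eventually constant, and the cycle degenerates — this is essentially the $k=2$ Prue–Scrimshaw argument, now available because all ranks are concentrated in $\{1,i+1\}$.

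The remaining case is that some $h_r<0$, i.e.\ the rank strictly increases somewhere. Here I would run a "rank can only drop slowly, but it did rise, so it must also have risen at a reversed arrow" contradiction. Concretely: along a reversed arrow $\beta_r\nearrow\alpha_r$ the rank is unchanged; along a forward arrow $\beta_r\searrow\alpha_{r+1}$ Lemma \ref{AcyclicLemma} lists exactly how $f_r=x'_t$ and $u_r=y'_t$ look, and in every case with $h_r<0$ one is in situation (A), so $f_r=e_{j_r}$, $u_r=v_{j_r}$. The key quantitative input I expect to need is a monovariant: define, for a cell $z'$ along the cycle, a weight such as the pair $(rank(z'),\,t_{rank(z')}(z'))$ ordered lexicographically, or more robustly a sum like $\sum_{s}\big(\eta(v_s,\overline{z'})-\eta(e_s,z')\big)$ over an appropriate range of $s$, and show that along both types of arrows this weight is monotone, strictly so at a step where $h_r<0$. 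The second, summation-type monovariant is the natural one because the rank is defined precisely by where $\eta(v_s,\overline{z'})-\eta(e_s,z')$ first vanishes; Proposition \ref{CorFaceOp} already controls exactly which $s$ the difference $v_s(\overline{\cdot})-e_s(\cdot)$ is preserved for under $\iota$-type faces, and the only exceptional case $\{j,s\}=\{i,l\}$, $H=C$ is excluded by Propositions \ref{RankWD} and \ref{RankRestC} once one knows the relevant indices are genuine ranks.

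I would organize the write-up as: (i) set up the notation $j_r,h_r$ for an alleged cycle and record $\sum h_r=0$; (ii) dispose of the all-$h_r=0$ case via Corollary \ref{ConstRank} and Proposition \ref{ConstDiff}, reducing to a Prue–Scrimshaw-style $t$-value argument showing the cycle cannot close; (iii) in the case some $h_r<0$, use Lemma \ref{AcyclicLemma}(A) together with a carefully chosen monovariant to derive a contradiction with the cycle closing up. The main obstacle, and the step that deserves the most care, is (iii): pinning down a weight that genuinely does not increase along reversed arrows $y'\nearrow x'$ (where $x'$ has one extra edge $e_{j}$ than $y'$) yet does not decrease along forward arrows $x'\searrow y'$, and that strictly changes at a $h_r<0$ step. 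The delicate point is that a reversed arrow adds an edge and a forward arrow removes one, so a naive cell-dimension count is useless; one has to use the rank-specific structure — that forward arrows remove an edge $e_{j}$ with $j$ the rank, and Lemma \ref{AcyclicLemma} constrains the removed edge $f_r$ very tightly — to make the monovariant work. Once the monovariant is in place, $H_W$ acyclic follows, and with Lemma \ref{MorseLemma} (applied with $X=\DF^k(G',n)$ and $Y$ the critical subcomplex) this gives Theorem \ref{StabDiscThm}, hence, via Corollary \ref{weakmaintheorem} and the cell-complex observation in the proof of Theorem \ref{StabThm}, the main theorem.
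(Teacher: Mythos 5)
Your reduction of a hypothetical cycle to the two cases ``all $h_r=0$'' or ``some $h_r<0$'' is fine, but both branches have genuine gaps. In the all-$h_r=0$ branch you invoke Corollary \ref{ConstRank} at every forward arrow, but its hypotheses are not satisfied there: that corollary requires the coordinate being removed to be an edge $e_s$, with $s$ an actual rank value, \emph{and} the replacement to be by $\iota(e_s)$. A forward arrow with $h_r=0$ need not have this form: by Lemma \ref{AcyclicLemma}(B.2), when $j_r\notin\{1,i+1\}$ it removes $e_{j_r-1}$ and replaces it by $v_{j_r-1}=\tau(e_{j_r-1})$, so Corollary \ref{ConstRank} gives nothing and no confinement of the ranks to $\{1,i+1\}$ follows; the subsequent ``Prue--Scrimshaw-style $t$-value argument'' is moreover only alluded to, not given. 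In the some-$h_r<0$ branch the whole argument rests on a monovariant that you never construct, and the candidates you name do not work as stated: the sum $\sum_{s}\bigl(\eta(v_s,\overline{z'})-\eta(e_s,z')\bigr)$ is unchanged along every reversed arrow (that is Proposition \ref{CorFaceOp}) and is also unchanged along a type-(A) forward arrow with $j_r\notin\{1,i+1\}$ (the gain at $s=j_r$ cancels the loss at $s=j_r-1$), so it does not strictly drop at the rank-raising steps you want to penalize; the lexicographic pair $(rank,t_{rank})$ is not monotone in a single direction along forward arrows either. Since this is the heart of the acyclicity proof, the proposal does not yet prove the theorem.

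For comparison, the paper needs no monovariant and no case split on the $h_r$. It normalizes the cycle so that $j:=rank(y'_1)=rank(x'_1)$ is the \emph{maximal} rank occurring, tracks the specific coordinate $t$ in which the first reversed arrow inserted the edge $e_j$, and looks at the later forward arrow $x'_m\searrow y'_{m+1}$ at which that $e_j$ is removed (replaced by $\iota(e_j)$). At that single, well-chosen step the hypotheses of Corollary \ref{ConstRank} genuinely hold, and maximality of $j$ then forces $j=i+1$ and $rank(x'_m)=1$, hence $\eta(v_1,\overline{x'_m})=\eta(e_1,x'_m)$. Proposition \ref{ConstDiff}, applicable to the initial segment of the cycle precisely because all its ranks are at most $i+1$, gives in addition $\eta(v_{i+1},\overline{x'_m})=\eta(e_{i+1},x'_m)$, so both inequalities in (\ref{simplificadas}) fail for $x'_m$, contradicting its externality. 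The missing idea in your outline is exactly this combination: start at a cell of maximal rank, follow the inserted $e_j$ until it is removed, and apply Corollary \ref{ConstRank} only there, where its $\iota$-type hypothesis is actually met; if you want to rescue your plan, that is the mechanism to import rather than a global weight function.
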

\begin{proof}
Suppose for a contradiction there is a cycle
\begin{equation}\label{inexistente}
y'_1\nearrow x'_1\searrow \dots \searrow y'_{\nu}\nearrow x'_{\nu}\searrow y'_{\nu+1}:=y'_1.
\end{equation}
%that closes for the first time, i.e., such that the cells $y'_1,y'_2,\ldots,y'_\nu$ are pairwise different.
Without loss of generality we can assume that
\begin{equation}\label{WLOG}
j:=rank(y'_1)=rank(x'_1)
\end{equation}
is the largest integer among the ranks of the cells involved in the cycle. In particular, the first pairing $y'_1\nearrow x'_1$ is obtained by replacing the vertex $\iota(e_j)$ in coordinate $t:=t_j(y'_1)$ of $y'_1$ by the edge $e_j$. Furthermore, since (\ref{inexistente}) is a cycle and since each pairing $y'_r\nearrow x'_r$ is obtained by replacement of some vertex-coordinate $\iota(e_p)$ of $y'_r$ by the corresponding edge-coordinate $e_p$ of $x'_r$, it must be the case that, at some point $x'_m\searrow y'_{m+1}$ of the path, the edge $e_j$ that was introduced in coordinate $t$ by the first pairing gets removed, necessarily replacing it by $\iota(e_j)$.
%In particular, all ``previous'' cells $x'_1,y'_2,x'_2,\ldots,x'_m$ keep $e_j$ in coordinate $t$.
Note that $m\geq 2$ because the directed path $y'_1\nearrow x'_1\searrow y'_1$ is impossible in $H_W$. Corollary \ref{ConstRank} then gives $\{j,rank(x'_m)\}=\{1,i+1\}$.
%, where $q:=rank(x'_m)=rank(y'_{m+1})$.
Actually, since $rank(x'_m)\leq j$, we necessarily have
\begin{equation}\label{necessarily}
j=i+1,\:\: rank(x'_m)=1 \text{\:\:and, in particular,\:\:}\eta(v_{1}, \overline{x'_m})-\eta(e_{1}, x'_m)=0.
\end{equation}
On the other hand, since $i+1$ is the largest integer among the ranks of the cells in $y'_1\nearrow x'_1\searrow \dots \searrow y'_{m}\nearrow x'_{m}$, Proposition\tilde\ref{ConstDiff}, (\ref{WLOG}) and (\ref{necessarily}) give $$\varnothing=v_{i+1}(\overline{y'_1})-e_{i+1}(y'_1)=v_{i+1}(\overline{x'_m})-e_{i+1}(x'_m)$$ or, in terms of cardinalities
\begin{equation}\label{contrafinal1}
\eta(v_{i+1}, \overline{x'_m})-\eta(e_{i+1}, x'_m)=0.
\end{equation}
But then (\ref{necessarily}) and (\ref{contrafinal1}) yield
\begin{itemize}
\item $\eta(w,\overline{x'_m})+\eta(v_{i+1}, \overline{x'_m})-\eta(e_{i+1}, x'_m)=\eta(w,\overline{x'_m})\leq k-1$,
\item $\eta(w,\overline{x'_m})+\eta(v_{1}, \overline{x'_m})-\eta(e_{1}, x'_m)=\eta(w,\overline{x'_m})\leq k-1$,
\end{itemize}
in contradiction to Proposition \ref{spelled} and the fact that $x'_m$ is an external cell.
\end{proof}

%\bibliographystyle{plain}
%\bibliography{bib}

\medskip
{\small \sc Departamento de Matem\'aticas

Centro de Investigaci\'on y de Estudios Avanzados del I.P.N.

Av.~Instituto Polit\'ecnico Nacional n\'umero 2508

San Pedro Zacatenco, M\'exico City 07000, M\'exico

{\tt oalvarado@math.cinvestav.mx}

{\tt jesus@math.cinvestav.mx}}

\end{document}